\theoremstyle{plain}
\newtheorem{prop} {Proposition} [section]
\newtheorem{thm}[prop] {Theorem} 
\newtheorem{dfn} [prop]{Definition}
\newtheorem{cor}[prop]{Corollary}
\newtheorem{lem}[prop]{Lemma}
\theoremstyle{definition}
\newtheorem{prob*}[prop]{Problem}
\newtheorem{rem}[prop]{Remark}
\title[SPECTRAL THEOREM]%
{A spectral theorem for a non-Archimedean valued field whose residue field is formally real}
\author[ISHIZUKA]%
{KOSUKE ISHIZUKA}
\address{Mathematical Institute, Graduate School of Science, Tohoku University, 6-3 Aramakiaza, Aoba, Sendai, Miyagi 980-8578, Japan.}
\date{}
\email{kosuke.ishizuka.r7@dc.tohoku.ac.jp}
\subjclass[2020]{Primary~46S10, Secondary~12J25}
\keywords{self-adjoint operators ; spectral theorem.}
\begin{document}
\begin{abstract}
In this paper, we will prove a spectral theorem for self-adjoint compactoid operators. Also, we will study the condition on which the coefficient field must be imposed. In order to get the theorems, we will use the Fredholm theory for compactoid operators. Moreover, the property of maximal complete field is important for our study. These facts will allow us to find that the spectral theorem depends only on the residue class field, and is independent of the valuation group of the coefficient field. As a result, we can settle the problem of the spectral theorem in the case where the residue class field is formally real.
\end{abstract}

\maketitle

\section{Introduction and preliminaries}

\subsection{Introduction}
The spectral theory on non-Archimedean functional analysis has been studied by many researchers. In this paper, we will prove the spectral theorem of self adjoint compactoid operators in the case where the residue class field is formally real (Theorem \ref{a3}, Theorem \ref{a6}, Corollary \ref{a7}). This claim was proposed in \cite[Theorem 4.3]{sp}, but the proof makes mistakes and the claim must be modified. We will give a correct proof and the exact condition in section \ref{a}. \par
For the study of the spectral theorem of compactoid operators, the Fredholm theory of compactoid operators (see Schickhof \cite{comp}) will play an important role. He found that if the coefficient field is algebraically closed, a compactoid operator is a spectral operator (\cite[Defnition 6.5]{comp}). \par
In \cite[section 6]{comp}, the coefficient field is assumed to be algebraically closed, but the assumption seems too strong for some results. Therefore, we will modify his theory to remove the assumption that the coefficient field is algebraically closed. In section \ref{app}, we summarize the discussion as an appendix. \par
As a result, we can apply the method of operator analysis to the spectral theory if the coefficient field $K$ satisfies the condition $(H)_K$ (see section \ref{a}), which is the condition on the diagonalization of a symmetric matrix. \par
In section \ref{b}, we will study the condition $(H)_K$. Keller and Ochsenius found that a symmetric matrix over $\mathbb{R}((t))$ can be diagonalized by an orthogonal matrix (see \cite{spect}). In this paper, we will extend the theorem (Theorem \ref{b2}), and get Corollary \ref{b3}. For the proof, we will use the spherical completion (c.f. \cite{van}), and the property of maximally complete field (\cite{kap}). These facts seem not to be often used, but they are very interesting themselves.

\subsection{Preliminaries}
In this paper, $K$ is a non-archimedean non-trivially valued field which is complete under the metric induced by the valuation $|\cdot| : K \to [0,\infty)$. A unit ball of $K$ is denoted by $B_K := \{ x \in K : |x| \leq 1 \}$. We denote by $k$ the residue class field of $K$.\par
Throughout, $(E,\|\cdot\|)$ is a Banach space over $K$.  Let $a \in E$, $r > 0$, we write $B_E(a,r)$ for the closed ball with radius $r$ about $a$, that is, $B_E(a,r) := \{ x \in E : \|x-a\| \leq r \}$. For a subset $X \subseteq E$, we denote by $[X]$ the $K$-vector space generated by $X$. Let $t \in (0,1]$. A sequence $(x_n)_{1 \le n \le N} \subseteq E \setminus \{0\}$, $N \in \mathbb{N} \cup \{ \infty \}$, is said to be a $t$-orthogonal sequence if for each sequence $(\lambda_n)_{1 \le n \le N} \subseteq K$, the inequality
\begin{align*}
t \cdot \max_{1 \leq i \leq N}  \| \lambda_i x_i \| \leq \| \sum_{i=1}^{N} \lambda_i x_i \| \quad 
\end{align*}
holds. A $t$-orthogonal sequence is said to be an orthogonal sequence if $t$ = 1. A subset $A$ of $E$ is said to be a compactoid if for every $r > 0$, there exist finite elements $a_1, \cdots, a_n$ of $E$ such that $A \subseteq B_E(0,r) + B_K a_1 + \cdots + B_K a_n$. \par 
Let $(F,\|\cdot\|)$ be another Banach space, we denote by $\mathcal{L}(E,F)$ the Banach space consisting of all continuous maps from $E$ to $F$ with the usual operator norm. If $(E,\|\cdot\|) = (F,\|\cdot\|)$, we write $\mathcal{L}(E) := \mathcal{L}(E,F)$. An operator $T \in \mathcal{L}(E,F)$ is said to be a compactoid operator if $T(B_E(0,1))$ is a compactoid. For details of compactoid operators, see \cite{van,sch1,comp}. \par
For a $T \in \mathcal{L}(E)$, we define its spectrum as 
\begin{align*}
  \sigma(T) := \{ \lambda \in K : \lambda I - T \, \text{is not invertible}\},
\end{align*}
where $I \in \mathcal{L}(E)$ is the identical operator on $E$, and we write
\begin{align*}
  \sigma_p(T) := \{ \lambda \in K : \text{Ker} (\lambda I - T) \neq 0 \}
\end{align*}
for eigenvalues of $T$. Also, we set
\begin{align*}
  U_T := \{ \lambda \in K : I - \lambda T \, \text{is invertible} \},
\end{align*}
and
\begin{align*}
  D_T := \{ r \in |K| : r \neq 0, B_K(0,r) \subseteq U_T \}.
\end{align*}

Let $r \in |K|$, $r \neq 0$. A function $f : B_K(0,r) \to E$ is said to be analytic if there exists a sequence $a_0, a_1, a_2, \cdots \in E$ such that $\lim_{n \to \infty} \|a_n\| r^n = 0$, and $f$ can be represented by
  \begin{align*}
    f(\lambda) = \sum_{n = 0}^{\infty} a_n \lambda^{n} \quad (\lambda \in B_K(0,r)).
  \end{align*}

\section{Non-Archimedean inner product on $c_0$}
Let $(c_0,\|\cdot\|)$ be the Banach space of all null sequences $x = (x_n)_{n \in \mathbb{N}}$ in $K$, and $\|x\| := \sup_{n \in \mathbb{N}} |x_n|$. There exists a symmetric bilinear form $\langle \cdot, \cdot \rangle$ on $c_0$ defined by
\begin{align*}
  \langle x, y \rangle := \sum_{n \in \mathbb{N}} x_n y_n
\end{align*}
where $x = (x_n)$, $y = (y_n) \in c_0$. \par

We denote by $e_1, e_2, \cdots \in c_0$ the canonical unit vectors. Then, an operator $T \in \mathcal{L}(c_0)$ can be written as a pointwise convergent sum 
\begin{align*}
  T = \sum_{i,j} a_{i,j} \cdot (e_j' \otimes e_i)
\end{align*}
where $e_j' \otimes e_i (x) := \langle e_j, x \rangle e_i$. From the perspective of this representation, we can characterize compactoid operators. 

\begin{thm}[c.f. {\cite[Theorem 8.1.9]{sch1}}] \label{c1}
  Let $T = \sum_{i,j} a_{i,j} \cdot (e_j' \otimes e_i) \in \mathcal{L}(c_0)$. Then $T$ is a compactoid operator if and only if $\lim_{i \to \infty} \sup_j |a_{i,j}| = 0$.
\end{thm}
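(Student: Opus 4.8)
The plan is to deduce the statement from the standard description of compactoid subsets of $c_0$: a bounded set $A\subseteq c_0$ is a compactoid if and only if $\lim_{i\to\infty}\sup_{x\in A}|x_i|=0$, i.e.\ the coordinates of the elements of $A$ decay to $0$ uniformly over $A$. I would either quote this (it is, in essence, \cite[Theorem 8.1.9]{sch1}) or prove it in a couple of lines. For the implication ``compactoid $\Rightarrow$ uniform decay'': if $A\subseteq B_{c_0}(0,r)+B_K a_1+\cdots+B_K a_n$, then since each $a_\ell\in c_0$ there is an $N$ with $|(a_\ell)_i|\le r$ for all $i>N$ and all $\ell$; writing $x\in A$ as $b+\sum_\ell\lambda_\ell a_\ell$ with $\|b\|\le r$ and $|\lambda_\ell|\le 1$, the ultrametric inequality gives $|x_i|\le r$ for $i>N$. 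For the converse, given $\varepsilon>0$ choose $N$ with $\sup_{x\in A}|x_i|\le\varepsilon$ for $i>N$; then $A$ is contained in $B_{c_0}(0,\varepsilon)$ plus the image of $A$ under the coordinate projection onto $[e_1,\dots,e_N]$, and the latter, being a bounded subset of a finite-dimensional space, lies inside a finitely generated $B_K$-submodule (here one uses that the value group is unbounded, so a bound for $A$ can be absorbed into a scalar).

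Next I would apply this with $A:=T(B_{c_0}(0,1))$. Using the given representation $T=\sum_{i,j}a_{i,j}\,(e_j'\otimes e_i)$ together with $(e_j'\otimes e_i)(x)=\langle e_j,x\rangle e_i=x_j e_i$, one gets $(Tx)_i=\sum_j a_{i,j}x_j$ for every $x=(x_j)\in c_0$ (the series converges since $|a_{i,j}x_j|\le\|T\|\,|x_j|\to 0$). Hence for $\|x\|\le 1$ we have $|(Tx)_i|\le\sup_j|a_{i,j}|$, and this bound is attained by taking $x=e_{j_0}$, so $\sup_{x\in B_{c_0}(0,1)}|(Tx)_i|=\sup_j|a_{i,j}|$ for each $i$. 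Substituting into the characterization of compactoids in $c_0$, $T(B_{c_0}(0,1))$ is a compactoid if and only if $\lim_{i\to\infty}\sup_j|a_{i,j}|=0$, which is exactly the assertion.

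For the ``if'' direction one can also bypass the projection argument: given $\varepsilon>0$, pick $N$ with $\sup_j|a_{i,j}|\le\varepsilon$ for $i>N$ and split $T=T_N+R$ with $T_N:=\sum_{i\le N}\sum_j a_{i,j}(e_j'\otimes e_i)$, so that $T_N$ has range inside the finite-dimensional space $[e_1,\dots,e_N]$ and $\|R\|\le\varepsilon$; then $T(B_{c_0}(0,1))\subseteq T_N(B_{c_0}(0,1))+B_{c_0}(0,\varepsilon)$ with $T_N(B_{c_0}(0,1))$ a bounded subset of a finite-dimensional space, hence a compactoid, so $T(B_{c_0}(0,1))$ is too. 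I do not anticipate a genuine obstacle here; the only points needing care are the passage between the module definition of ``compactoid'' and the uniform tail-decay condition on $c_0$, and the harmless fact that a bound for a subset of $c_0$ need not itself lie in $|K|$ but is dominated by some $|\lambda|$ with $\lambda\in K$ because the valuation is non-trivial.
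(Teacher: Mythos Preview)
Your argument is correct. The paper does not supply its own proof of this statement but simply records it with a reference to \cite[Theorem 8.1.9]{sch1}; your route via the uniform tail-decay characterization of compactoid subsets of $c_0$, together with the identity $\sup_{\|x\|\le 1}|(Tx)_i|=\sup_j|a_{i,j}|$, is exactly the standard argument underlying that reference.
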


\begin{dfn}
  We say that $T \in \mathcal{L}(c_0)$ admits an adjoint operator $S \in \mathcal{L}(c_0)$ if for each $x,y \in c_0$, $S$ satisfies
  \begin{align*}
    \langle T(x), y \rangle = \langle x, S(y) \rangle.
  \end{align*}
  If $T$ admits an adjoint operator $S$, then, since $S$ is uniquely determined by $T$, we write $T^* := S$.
\end{dfn}

It is easy to see that $T = \sum_{i,j} a_{i,j} \cdot (e_j' \otimes e_i)$ admits an adjoint operator if and only if for each $i \in \mathbb{N}$, we have $\lim_{j} a_{i,j} = 0$. If $T$ admits an adjoint operator $T^*$, then $T^*$ can  be represented by
\begin{align*}
  T^* = \sum_{i,j} a_{j,i} \cdot (e_j' \otimes e_i),
\end{align*}
and $\|T^*\| = \|T\|$. From Theorem \ref{c1}, we have the following theorem.

\begin{thm} \label{c2}
  Let $T = \sum_{i,j} a_{i,j} \cdot (e_j' \otimes e_i) \in \mathcal{L}(c_0)$. Then, $T$ is a compactoid operator which admits an adjoin operator if and only if $\lim_{n \to \infty} \sup_{n < i,j} |a_{i,j}| = 0$.
\end{thm}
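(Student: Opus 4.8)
The plan is to reduce the statement, via Theorem \ref{c1} and the observation recorded just before it, to a purely elementary fact about double sequences, and then to prove that fact by a direct $\varepsilon$-argument. I read the condition $\lim_{n\to\infty}\sup_{n<i,j}|a_{i,j}|=0$ as: for every $\varepsilon>0$ there is an $n$ with $|a_{i,j}|<\varepsilon$ whenever $i>n$ or $j>n$; equivalently, writing $d_n:=\sup\{|a_{i,j}|:\max(i,j)>n\}$, the non-increasing sequence $(d_n)$ tends to $0$. Since $T=\sum_{i,j}a_{i,j}(e_j'\otimes e_i)\in\mathcal{L}(c_0)$, the matrix $(a_{i,j})$ is bounded and each of its columns is a null sequence; by Theorem \ref{c1}, $T$ is compactoid if and only if $\lim_{i\to\infty}\sup_j|a_{i,j}|=0$ (the sup-norms of the rows tend to $0$), and by the remark preceding the theorem, $T$ admits an adjoint if and only if $\lim_{j\to\infty}a_{i,j}=0$ for every fixed $i$ (every row is a null sequence). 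So it suffices to show
\[
\Big(\lim_{i\to\infty}\sup_j|a_{i,j}|=0\Big)\ \text{and}\ \Big(\lim_{j\to\infty}a_{i,j}=0\ \text{for every}\ i\Big)\quad\Longleftrightarrow\quad d_n\to 0 .
\]

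For the implication $\Leftarrow$, suppose $d_n\to 0$. If $i>n$ then every entry of the $i$-th row has absolute value $\le d_n$, so $\sup_j|a_{i,j}|\le d_n$, giving the first condition; and for a fixed $i$ and any $j>\max(i,n)$ we have $|a_{i,j}|\le d_n$, giving $\lim_{j\to\infty}a_{i,j}=0$. For the implication $\Rightarrow$, fix $\varepsilon>0$. Using the first hypothesis, choose $M$ with $\sup_j|a_{i,j}|<\varepsilon$ for all $i>M$; then, using the second hypothesis, for each of the finitely many indices $i\in\{1,\dots,M\}$ choose $N_i$ with $|a_{i,j}|<\varepsilon$ for all $j>N_i$. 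Put $N:=\max\{M,N_1,\dots,N_M\}$. If $i>N$ or $j>N$, then either $i>M$ (so $|a_{i,j}|<\varepsilon$ directly), or $i\le M$, in which case necessarily $j>N\ge N_i$ and again $|a_{i,j}|<\varepsilon$; hence $d_N\le\varepsilon$, and since $(d_n)$ is non-increasing, $d_n\to 0$.

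The only point demanding care — and it is still routine — is the direction $\Rightarrow$: Theorem \ref{c1} controls only the rows with large index, so one must separately handle the finitely many remaining rows, and the key observation is that each of these is a null sequence by the adjoint hypothesis, hence its (possibly large) entries are confined to a bounded range of column indices and are therefore eventually swallowed by the growing top-left block. Apart from this, the argument uses nothing beyond Theorem \ref{c1}, the adjoint criterion stated above, and the basic fact that membership in $\mathcal{L}(c_0)$ already forces the columns of $(a_{i,j})$ to be null.
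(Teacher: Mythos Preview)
Your proof is correct and follows the route the paper intends: the paper merely states that the result follows from Theorem~\ref{c1} (together with the adjoint criterion recorded just before Theorem~\ref{c2}) and gives no further details, and you have supplied precisely that reduction and the easy $\varepsilon$-argument on the double sequence. Your reading of the condition as $d_n=\sup\{|a_{i,j}|:\max(i,j)>n\}\to 0$ is the one that makes the statement true and is consistent with its use in the proof of Theorem~\ref{a3}, where it yields $\|T-T_n\|\to 0$.
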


In general, the symmetric bilinear form $\langle \cdot, \cdot \rangle$ on $c_0$ does not satisfy the equality $\|x\|^2 = |\langle x, x \rangle |$. On the other hand, if the residue class field $k$ of $K$ is formally real, $\langle \cdot, \cdot \rangle$ induces the norm $\|\cdot\|$ on $c_0$ (L. Narici and E. Beckenstein \cite{inner}). 

\begin{dfn}
  A field $F$ is called formally real if for any finite subset $(a_i)_{1 \le i \le n} \subseteq F$, $\sum_{1 \le i \le n} a_i^2 = 0$ implies $a_i = 0$ for each $i$.
\end{dfn}

\begin{thm}[{\cite[Theorem 6.1]{inner}}]
  Suppose the residue class field $k$ of $K$ is formally real. Then, we have $\|x\|^2 = |\langle x, x \rangle |$ for each $x \in c_0$.
\end{thm}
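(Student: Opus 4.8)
\emph{Proof proposal.} The plan is to prove the two inequalities $|\langle x, x\rangle| \le \|x\|^2$ and $|\langle x, x\rangle| \ge \|x\|^2$ separately; the first is formal, and the hypothesis on $k$ will be used only to get the second. The upper bound is immediate from the strong triangle inequality: $|\langle x, x\rangle| = \bigl|\sum_{n} x_n^2\bigr| \le \sup_n |x_n|^2 = \|x\|^2$. For the lower bound one may assume $x \ne 0$, and since $x$ is a null sequence the supremum $\|x\| = \sup_n |x_n|$ is attained, say at an index $m$ with $|x_m| = \|x\|$. Replacing $x$ by $x_m^{-1}x \in c_0$ multiplies $\langle x, x\rangle$ by $x_m^{-2}$ and $\|x\|$ by $|x_m|^{-1}$, so it suffices to treat the normalized case $\|x\| = 1$.

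So assume $\|x\| = 1$ and set $S := \{ n \in \mathbb{N} : |x_n| = 1 \}$. Because $x \in c_0$ the set $S$ is finite, and it is nonempty since $\|x\|=1$ is attained. I would then split $\langle x, x\rangle = \sum_{n \in S} x_n^2 + \sum_{n \notin S} x_n^2$. The first term is a finite sum of elements of $B_K$; the second converges in $K$ (as $x_n^2 \to 0$) and satisfies $\bigl|\sum_{n \notin S} x_n^2\bigr| \le \max_{n \notin S} |x_n|^2 < 1$, the maximum being attained (or $0$) because $x$ is a null sequence, and strictly below $1$ by definition of $S$. Hence, reducing modulo the maximal ideal $\mathfrak{m} = \{ y \in B_K : |y| < 1 \}$, the residue of $\langle x, x\rangle$ in $k = B_K/\mathfrak{m}$ equals $\sum_{n \in S} \bar x_n^2$, where for $n \in S$ the residue $\bar x_n$ lies in $k^\times$ since $x_n$ is a unit of $B_K$.

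Now the hypothesis enters exactly once: $S$ is nonempty and each $\bar x_n$ ($n \in S$) is nonzero, so formal reality of $k$ forces $\sum_{n \in S} \bar x_n^2 \ne 0$ in $k$. Therefore the residue of $\langle x, x\rangle$ is nonzero, i.e.\ $\langle x, x\rangle \notin \mathfrak{m}$, which together with $|\langle x, x\rangle| \le 1$ gives $|\langle x, x\rangle| = 1 = \|x\|^2$; combined with the normalization step this yields $|\langle x, x\rangle| = \|x\|^2$ for every $x \in c_0$.

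I do not expect a serious obstacle: the whole content is the passage to the residue field. The one point requiring care is the separation of the finitely many coordinates of maximal absolute value from the tail of small coordinates — this is precisely what membership in $c_0$ (rather than merely $\ell^\infty$) guarantees, and it is also what makes $\sum_{n \in S} \bar x_n^2$ a legitimate \emph{finite} sum in $k$ to which the defining property of a formally real field applies. Without the null-sequence condition the argument breaks down, which is consistent with the fact that $\|x\|^2 = |\langle x, x\rangle|$ can fail for general sequences.
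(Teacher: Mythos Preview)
The paper does not supply its own proof of this statement; it is quoted without proof from \cite[Theorem~6.1]{inner}. Your argument is correct and is exactly the standard one: normalize, isolate the finitely many coordinates of absolute value $1$, reduce modulo the maximal ideal, and use formal reality of $k$ to see that the resulting finite sum of nonzero squares is nonzero---so there is nothing to compare, and your write-up would serve perfectly well as the omitted proof.
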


From now on, in this section, we suppose that the residue class field $k$ of $K$ is \textbf{formally real}.

\begin{dfn}
  A subset $X \subseteq c_0$ is called orthonormal if for each distinct pair $x, y \in X$, we have $\langle x, y \rangle = 0$.
\end{dfn}

\begin{thm}[{\cite[Theorem 3.1]{inner}}]
  Suppose that the residue class field $k$ of $K$ is formally real. Then, an orthonormal subset $X \subseteq c_0$ is orthogonal, that is, for any finite distinct elements $x_1, x_2, \cdots, x_n \in X$, the equality
  \begin{align*}
 \max_{1 \leq i \leq n}  \| \lambda_i x_i \| = \| \sum_{i=1}^{n} \lambda_i x_i \|, \quad (\lambda_1, \lambda_2, \cdots, \lambda_n \in K)
\end{align*}
holds.
\end{thm}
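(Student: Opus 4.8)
The plan is to deduce the norm identity from the inner-product identity $\|v\|^{2}=|\langle v,v\rangle|$ of the preceding theorem together with the ultrametric inequality, the only genuine ingredient being that formal reality of $k$ forbids cancellation among terms of maximal absolute value. First I would reduce. Discarding the indices with $\lambda_{i}=0$, and observing that we may assume every $x_{i}\neq 0$ (a zero vector contributes $0$ to both sides), it suffices to prove $\|\sum_{i}\lambda_{i}x_{i}\|^{2}=\max_{i}\|\lambda_{i}x_{i}\|^{2}$. Applying $\|v\|^{2}=|\langle v,v\rangle|$ to $v=\sum_{i}\lambda_{i}x_{i}$ and to $v=\lambda_{i}x_{i}$, and using orthonormality to annihilate the cross terms $\langle x_{i},x_{j}\rangle$ for $i\neq j$, this is equivalent to
\[
  \Big|\sum_{i}\lambda_{i}^{2}\langle x_{i},x_{i}\rangle\Big|\;=\;\max_{i}\,\big|\lambda_{i}^{2}\langle x_{i},x_{i}\rangle\big|.
\]
Writing $a_{i}:=\lambda_{i}^{2}\langle x_{i},x_{i}\rangle$ (so $a_{i}\neq 0$ exactly when $\lambda_{i}\neq 0$), the inequality ``$\leq$'' is immediate from the ultrametric inequality, so everything comes down to the reverse one.

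For the reverse inequality I would pass to the residue field. Since $\langle x_{i},x_{i}\rangle=\sum_{n}(x_{i})_{n}^{2}$ with $(x_{i})_{n}\to 0$, each $a_{i}=\sum_{n}(\lambda_{i}(x_{i})_{n})^{2}$ is a convergent sum of squares in $K$ with $|a_{i}|=\max_{n}|\lambda_{i}(x_{i})_{n}|^{2}$, the maximum being attained exactly at the coordinates $n$ with $|(x_{i})_{n}|=\|x_{i}\|$ (such $n$ exist because $x_{i}\in c_{0}\setminus\{0\}$). Let $S$ be the set of indices $i$ at which $|a_{i}|$ is maximal, put $\rho:=\max_{i}|a_{i}|$, and choose $b\in K$ with $|b|=\rho$. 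For $i\in S$ the maximising coordinates satisfy $|\lambda_{i}(x_{i})_{n}|=\sqrt{\rho}$, a value independent of $i$ and attained in $|K|$, so fix $\beta\in K$ with $|\beta|=\sqrt{\rho}$. Dividing $\sum_{i}a_{i}$ by $b$ and reducing modulo the maximal ideal $\mathfrak m_{K}$ of $B_{K}$, only the terms with $i\in S$ and only their maximising coordinates survive, and one obtains in $k$
\[
  \overline{\,b^{-1}\!\sum_{i}a_{i}\,}\;=\;\overline{\beta^{2}b^{-1}}\cdot\sum_{i\in S}\ \sum_{n:\,|(x_{i})_{n}|=\|x_{i}\|}\big(\overline{\lambda_{i}(x_{i})_{n}\beta^{-1}}\big)^{2},
\]
where $\overline{\beta^{2}b^{-1}}\in k^{\times}$ and the double sum is a nonempty finite sum of squares of \emph{nonzero} elements of $k$.

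By formal reality of $k$ such a sum of squares cannot vanish, so the right-hand side is nonzero in $k$; hence $|b^{-1}\sum_{i}a_{i}|=1$, that is $|\sum_{i}a_{i}|=\rho=\max_{i}|a_{i}|$. Feeding this back through the reductions of the first step gives $\|\sum_{i}\lambda_{i}x_{i}\|^{2}=\max_{i}\|\lambda_{i}x_{i}\|^{2}$, and taking square roots completes the proof.

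I expect the main obstacle to be bookkeeping rather than anything conceptual: one must verify that the rescaling is legitimate when the value group is dense — i.e.\ that $\rho$ and $\sqrt{\rho}$ genuinely occur as absolute values of elements of $K$, which is precisely why $b$ and $\beta$ above are chosen only after these values have been exhibited via the $a_{i}$ and the maximising coordinates — and that reduction modulo $\mathfrak m_{K}$ may be applied term-by-term to the infinite but convergent sums $\sum_{n}(\lambda_{i}(x_{i})_{n})^{2}$ (harmless, since only finitely many summands have unit absolute value). The one idea that makes everything work is that each diagonal quantity $\langle x_{i},x_{i}\rangle$, suitably normalised, has residue a sum of squares in $k$, after which formal reality does the rest. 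One could alternatively induct on $n$, handling the two-term case $|\lambda_{1}^{2}\langle x_{1},x_{1}\rangle+\lambda_{2}^{2}\langle x_{2},x_{2}\rangle|$ by the same residue-field computation, but the all-at-once version above is cleaner.
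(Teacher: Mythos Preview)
The paper does not supply its own proof of this statement; it is quoted as \cite[Theorem~3.1]{inner} without argument, so there is no in-paper proof to compare against. Your argument is correct and is the natural one: expanding $\langle v,v\rangle$ with the cross terms killed by orthonormality reduces the claim to $\bigl|\sum_i a_i\bigr|=\max_i|a_i|$ for $a_i=\lambda_i^{2}\langle x_i,x_i\rangle$; after normalising by an element of absolute value $\rho=\max_i|a_i|$, the residue in $k$ is a unit times a nonempty finite sum of nonzero squares, which formal reality of $k$ forbids from vanishing. The care you take over the existence of $b$ and $\beta$ (both witnessed explicitly as absolute values of elements of $K$) and over term-by-term reduction of the convergent sums $\sum_n(\lambda_i(x_i)_n)^2$ is appropriate and correctly handled.
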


By the Gram-Schmidt procedure, we have the following theorem.

\begin{thm}[{\cite[Section 7]{inner}}]
  Let $M \subseteq c_0$ be a finite-dimensional subspace. Then, there exists a basis $\{x_1, \cdots, x_n\} \subseteq M$ as a $K$-vector space such that it is an orthonormal set.
\end{thm}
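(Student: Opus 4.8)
The statement to prove: for a finite-dimensional subspace $M \subseteq c_0$, there is a $K$-basis $\{x_1,\dots,x_n\}$ of $M$ which is orthonormal with respect to $\langle\cdot,\cdot\rangle$.

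Let me think about what's available. We're in the formally-real residue field case, so $\|x\|^2 = |\langle x,x\rangle|$ for all $x \in c_0$, and orthonormal sets are orthogonal. The claim is essentially that Gram–Schmidt works. But Gram–Schmidt over a non-Archimedean field needs: (1) you can't necessarily normalize to get unit vectors (the value group might not contain the needed square roots); "orthonormal" here means $\langle x_i, x_j\rangle = 0$ for $i \neq j$, NOT $\langle x_i, x_i \rangle = 1$. (2) the nondegeneracy: when you project, you need $\langle v, v\rangle \neq 0$ for the vectors you've built, i.e., $\|v\|^2 = |\langle v,v\rangle|$ which is nonzero as long as $v \neq 0$. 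So actually the formally-real hypothesis gives exactly the nondegeneracy needed to run the classical projection argument. Wait, but there's a subtlety: even if $\langle v_i, v_i\rangle \neq 0$ for each built vector, when we subtract $\frac{\langle w, v_i\rangle}{\langle v_i, v_i\rangle} v_i$ we need to ensure the result is still nonzero (it is, since $w \notin \operatorname{span}$ of previous) and that orthogonality propagates.

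So here is the plan. Induct on $\dim M = n$. For $n = 1$ take any nonzero $x_1 \in M$; a singleton is vacuously orthonormal. For the inductive step, pick any basis $w_1, \dots, w_n$ of $M$. Apply the inductive hypothesis to $M' := [\{w_1,\dots,w_{n-1}\}]$ to get an orthonormal basis $x_1,\dots,x_{n-1}$ of $M'$. Now set
\[
x_n := w_n - \sum_{i=1}^{n-1} \frac{\langle w_n, x_i\rangle}{\langle x_i, x_i\rangle} x_i.
\]
The division makes sense because $x_i \neq 0$ implies $\|x_i\|^2 = |\langle x_i, x_i\rangle| > 0$ by the formally-real hypothesis, so $\langle x_i, x_i\rangle \neq 0$. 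Then $x_n \neq 0$ since $w_n \notin M'$. A direct computation using bilinearity, symmetry, and the orthogonality $\langle x_i, x_j\rangle = 0$ for $i \neq j$ shows $\langle x_n, x_j\rangle = \langle w_n, x_j\rangle - \frac{\langle w_n, x_j\rangle}{\langle x_j, x_j\rangle}\langle x_j, x_j\rangle = 0$ for each $j < n$. Finally $\{x_1,\dots,x_n\}$ spans $M$ (each $w_i$ lies in its span) and has $n$ elements, hence is a basis, and it is orthonormal by construction.

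The main thing to be careful about — the only place the hypothesis is genuinely used — is the nonvanishing of $\langle x_i, x_i\rangle$, which would fail for a general non-Archimedean field (the bilinear form can be isotropic on $c_0$); here it is precisely guaranteed by Theorem \ref{inner}-type identity $\|x\|^2 = |\langle x,x\rangle|$ valid under formal reality of $k$. Everything else is the classical Gram–Schmidt bookkeeping and carries over verbatim, with the understanding that we do not attempt to rescale the $x_i$ to have $\langle x_i, x_i\rangle = 1$, since that square root need not exist in $K$. I would also remark that the resulting orthonormal basis is automatically orthogonal (in the norm sense, constant $t = 1$) by the earlier theorem, though that is not needed for the statement itself.
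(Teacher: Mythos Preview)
Your proof is correct and follows exactly the approach the paper indicates: the paper simply writes ``By the Gram--Schmidt procedure, we have the following theorem'' and defers to \cite[Section 7]{inner}, which is precisely the argument you have written out, with the formally-real hypothesis entering only to guarantee $\langle x_i,x_i\rangle\neq 0$ so that the projection coefficients are defined. Your clarification that ``orthonormal'' here means pairwise $\langle x_i,x_j\rangle=0$ (and not $\langle x_i,x_i\rangle=1$) matches the paper's Definition and is the right reading.
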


\begin{dfn}
  Let $X \subseteq c_0$. We denote by $X^{\perp} := \{y \in c_0 : \langle x, y \rangle = 0 \, \text{for each} \, x \in X\}$ the normal complement of $X$. A closed subspace $M \subseteq c_0$ is called normally complemented if $M \oplus M^{\perp} = c_0$.
\end{dfn}

Even if $k$ is formally real, there exists a closed subspace $M \subseteq c_0$ which is not normally complemented (\cite[Remark 9.1]{inner}). On the other hand, if $M$ is finite-dimensional, it is normally complemented.

\begin{thm}[{\cite[Corollary 8.2]{inner}}]
  Let $M \subseteq c_0$ be a finite-dimensional subspace. Then, $M$ is normally complemented.
\end{thm}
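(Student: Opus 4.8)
\emph{Proof proposal.} The plan is to exhibit the orthogonal projection of $c_0$ onto $M$ explicitly and then read the decomposition $c_0 = M \oplus M^{\perp}$ off from it. By the preceding theorem (Gram--Schmidt in $c_0$) choose a $K$-basis $\{x_1,\dots,x_n\}$ of $M$ that is orthonormal, i.e. $\langle x_i,x_j\rangle = 0$ for $i \ne j$. Since $k$ is formally real we have $\|x\|^2 = |\langle x,x\rangle|$ for every $x \in c_0$, so $c_i := \langle x_i,x_i\rangle \ne 0$ because $x_i \ne 0$, and we may define
\[
  P \colon c_0 \to c_0, \qquad P(y) := \sum_{i=1}^{n} \frac{\langle x_i,y\rangle}{c_i}\, x_i .
\]

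First I would check that $P \in \mathcal{L}(c_0)$. From $\langle x,y\rangle = \sum_n x_n y_n$ and the ultrametric inequality one gets $|\langle x_i,y\rangle| \le \|x_i\|\,\|y\|$, and $|c_i| = \|x_i\|^2$. Using that $\{x_1,\dots,x_n\}$ is orthogonal also in the norm sense (the cited consequence of formal reality), $\|P(y)\| = \max_{1\le i\le n} \frac{|\langle x_i,y\rangle|}{|c_i|}\|x_i\| \le \|y\|$, so $\|P\| \le 1$. Next, orthonormality gives $P(x_j) = x_j$ for each $j$, hence $P|_M = \mathrm{id}_M$; and since $P(y) \in [\{x_1,\dots,x_n\}] = M$ always, we conclude $P(c_0) = M$ and $P^2 = P$.

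Then I would identify the kernel: by the same norm-orthogonality, $P(y) = 0$ iff $\langle x_i,y\rangle = 0$ for all $i$, which by bilinearity (as $M = [\{x_1,\dots,x_n\}]$) is equivalent to $y \in M^{\perp}$; thus $\mathrm{Ker}\,P = M^{\perp}$. Finally, for arbitrary $y \in c_0$ write $y = P(y) + (y - P(y))$: here $P(y) \in M$ and $P(y - P(y)) = P(y) - P^2(y) = 0$, so $y - P(y) \in M^{\perp}$; moreover $M \cap M^{\perp} = 0$ since $x \in M \cap M^{\perp}$ forces $\langle x,x\rangle = 0$, i.e. $\|x\| = 0$. (Recall $M$ is closed, being finite-dimensional.) Hence $c_0 = M \oplus M^{\perp}$ and $M$ is normally complemented. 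The only genuinely non-formal ingredients are the three cited facts — existence of an orthonormal basis of $M$, the implication orthonormal $\Rightarrow$ norm-orthogonal, and the identity $\|x\|^2 = |\langle x,x\rangle|$, all of which come from $k$ being formally real — so there is no real obstacle beyond invoking them; the one place deserving slight care is that both the boundedness of $P$ and the computation of $\mathrm{Ker}\,P$ rest on the orthogonality norm identity rather than on any completeness or complementation of $M^{\perp}$ (which, as the remark before the statement notes, can genuinely fail for infinite-dimensional closed subspaces).
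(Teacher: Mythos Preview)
The paper does not actually prove this statement: it is quoted verbatim from \cite[Corollary 8.2]{inner} without argument. Your proof is correct, and in fact it is precisely the construction the paper records immediately afterwards as Theorem~\ref{c3} (the explicit formula $P(x)=\sum_i \langle x,x_i\rangle/\langle x_i,x_i\rangle\, x_i$ from \cite[Theorem 8.1]{inner}); so your approach coincides with the one implicit in the cited source.
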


We introduce a normal projection to characterize whether a closed subspace is normally complemented.

\begin{dfn}[{\cite[Definition 6]{hil}}]
  An operator $P \in \mathcal{L}(E)$ is called a normal projection if $P^2 = P$ and $P^* = P$. 
\end{dfn}

\begin{thm}[{\cite[Corollary 3]{hil}}]
  Let $M \subseteq c_0$ be a closed subspace. Then, $M$ is normally complemented if and only if there exists a normal projection $P$ onto $M$.
\end{thm}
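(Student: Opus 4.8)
The plan is to prove both implications directly, the key (and essentially the only) tool being the identity $\|x\|^2 = |\langle x,x\rangle|$ available because $k$ is formally real, together with the Cauchy--Schwarz-type bound $|\langle u,v\rangle|\le\|u\|\,\|v\|$, which is immediate from the ultrametric inequality. I do not expect to need any closed-graph argument.

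For the implication ``normal projection $\Rightarrow$ normally complemented'', I would start from $P\in\mathcal{L}(c_0)$ with $P^2=P$, $P^*=P$ and $P(c_0)=M$. Since $P$ restricts to the identity on its range $M$ (if $x=Py$ then $Px=P^2y=Py=x$), every $x\in c_0$ decomposes as $x=Px+(x-Px)$ with $Px\in M$, $x-Px\in\ker P$, and $M\cap\ker P=0$; hence $c_0=M\oplus\ker P$ algebraically. Then I would identify $\ker P$ with $M^\perp$: if $x\in\ker P$ then for $m\in M$ one has $\langle m,x\rangle=\langle Pm,x\rangle=\langle m,Px\rangle=0$, giving $\ker P\subseteq M^\perp$; conversely, for $x\in M^\perp$ the vector $Px$ lies in $M$, so $\langle x,Px\rangle=0$, whence $\langle Px,Px\rangle=\langle x,P^2x\rangle=\langle x,Px\rangle=0$, and the norm identity forces $Px=0$, giving $M^\perp\subseteq\ker P$. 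Thus $c_0=M\oplus M^\perp$.

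For the converse I would assume $c_0=M\oplus M^\perp$ and let $P$ be the linear projection onto $M$ along $M^\perp$, i.e. $Px=m$ when $x=m+m'$ with $m\in M$, $m'\in M^\perp$. Idempotency and $P(c_0)=M$ are immediate. Continuity comes from the inner product: writing $x=m+m'$ gives $\langle m,x\rangle=\langle m,m\rangle$, so $\|m\|^2=|\langle m,m\rangle|=|\langle m,x\rangle|\le\|m\|\,\|x\|$, hence $\|Px\|=\|m\|\le\|x\|$ and $P\in\mathcal{L}(c_0)$ with $\|P\|\le 1$. Finally, for $x=m_1+m_1'$ and $y=m_2+m_2'$ both $\langle Px,y\rangle$ and $\langle x,Py\rangle$ reduce to $\langle m_1,m_2\rangle$ by orthogonality of $M$ and $M^\perp$, so $P$ admits an adjoint with $P^*=P$; thus $P$ is a normal projection onto $M$.

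The step I expect to require the most care is the continuity of the projection in the converse direction. Over a general non-Archimedean field one would have to invoke a closed-graph or open-mapping theorem for the pair of closed subspaces $M$ and $M^\perp$, which is delicate; the point worth stressing is that the formally-real hypothesis on $k$ makes this automatic, since it upgrades the bilinear form to a norm-compatible inner product and yields $\|P\|\le 1$ by a one-line estimate.
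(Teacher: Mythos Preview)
Your argument is correct in both directions. The paper, however, does not supply its own proof of this statement: it is merely quoted as \cite[Corollary 3]{hil}, so there is nothing in the paper to compare your approach against. For what it is worth, the route you take is the natural one, and your observation that the identity $\|x\|^2=|\langle x,x\rangle|$ yields $\|P\|\le 1$ directly---bypassing any open-mapping or closed-graph argument---is exactly the point of working under the formally-real hypothesis on $k$.
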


\begin{thm}[{\cite[Theorem 8.1]{inner}}] \label{c3}
  Let $M \subseteq c_0$ be a finite-dimensional subspace, and $\{x_1, \cdots, x_n\} \subseteq M$ be an orthonormal basis. Then, the normal projection $P$ onto $M$ can be represented by
  \begin{align*}
    P(x) = \sum_{i=1}^n \frac{\langle x, x_i \rangle}{\langle x_i, x_i \rangle} x_i.
  \end{align*}
\end{thm}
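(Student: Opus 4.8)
The plan is to verify directly that the operator $P$ defined by the formula
\begin{align*}
  P(x) = \sum_{i=1}^n \frac{\langle x, x_i \rangle}{\langle x_i, x_i \rangle} x_i
\end{align*}
is a well-defined bounded operator on $c_0$, that it is idempotent, that it maps onto $M$, and that it is self-adjoint; by the preceding theorem (the characterization of normal complementation via normal projections), these properties together pin down the claim, but here we are merely exhibiting the explicit form. First I would note that each term $x \mapsto \langle x, x_i\rangle$ is a bounded linear functional on $c_0$ (indeed $|\langle x, x_i\rangle| \le \|x\|\,\|x_i\|$ by the ultrametric inequality applied to the defining series), so $P$ is a finite sum of bounded rank-one operators and hence lies in $\mathcal{L}(c_0)$. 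Since each $x_i \in M$ and $M$ is a subspace, $P(x) \in M$ for all $x$, so the range of $P$ is contained in $M$.

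Next I would use orthonormality crucially: for the basis vectors, $\langle x_j, x_i\rangle = 0$ when $i \neq j$, so evaluating the formula on $x_j$ gives $P(x_j) = \frac{\langle x_j, x_j\rangle}{\langle x_j, x_j\rangle} x_j = x_j$ (here one needs $\langle x_i, x_i\rangle \neq 0$, which follows from $x_i \neq 0$ and $\|x_i\|^2 = |\langle x_i, x_i\rangle|$, valid since $k$ is formally real). Because $\{x_1,\dots,x_n\}$ spans $M$, this shows $P$ restricted to $M$ is the identity; combined with $\mathrm{range}(P) \subseteq M$ it yields both $P(M) = M$ (so $P$ is onto $M$) and $P^2 = P$ (apply $P$ to $P(x) \in M$). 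For self-adjointness, I would compute $\langle P(x), y\rangle$ and $\langle x, P(y)\rangle$ directly from the formula: both equal $\sum_{i=1}^n \frac{\langle x, x_i\rangle \langle y, x_i\rangle}{\langle x_i, x_i\rangle}$ by the bilinearity and symmetry of $\langle\cdot,\cdot\rangle$, so $P^* = P$. Hence $P$ is a normal projection onto $M$.

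Finally I would observe that $P$ is the normal projection onto $M$ — i.e. that it is the \emph{unique} one — which is needed to conclude the representation formula describes \emph{the} normal projection rather than merely \emph{a} normal projection; this follows because a normal projection $P$ onto $M$ is determined by the decomposition $c_0 = M \oplus M^\perp$ (which is unique, as $M$ is finite-dimensional hence normally complemented), acting as the identity on $M$ and as zero on $M^\perp$, and one checks the displayed $P$ kills $M^\perp$ since $x \in M^\perp$ forces every $\langle x, x_i\rangle = 0$.

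I do not expect a serious obstacle here: the argument is a routine transcription of the classical Hilbert-space computation, the only non-formal input being the identity $\|x\|^2 = |\langle x, x\rangle|$ which guarantees the denominators $\langle x_i, x_i\rangle$ are nonzero and which has already been established in the formally real case. The mild subtlety worth stating carefully is boundedness of $P$ on all of $c_0$ (not just on $M$), but this is immediate from the estimate $\|P(x)\| \le \max_i \frac{|\langle x, x_i\rangle|}{|\langle x_i, x_i\rangle|}\|x_i\| \le \max_i \frac{\|x\|\,\|x_i\|}{\|x_i\|^2}\|x_i\| = \|x\|$, where the ultrametric inequality controls the finite sum.
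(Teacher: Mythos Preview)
Your proof is correct. The paper itself does not supply a proof of this statement; it is quoted verbatim as \cite[Theorem~8.1]{inner} and used as a black box. Your direct verification that the displayed formula defines a bounded, idempotent, self-adjoint operator with range $M$ (and that it annihilates $M^\perp$, hence is \emph{the} normal projection) is exactly the standard argument and goes through without issue in this setting; the only non-Archimedean ingredients you need---the Cauchy--Schwarz-type bound $|\langle x,y\rangle|\le\|x\|\,\|y\|$ and the identity $|\langle x,x\rangle|=\|x\|^2$ guaranteeing nonvanishing denominators---are already recorded earlier in the paper.
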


\section{The spectral theorem} \label{a}

In this section, suppose that the residue class field $k$ of $K$ is \textbf{formally real}. We say that an operator $T \in \mathcal{L}(c_0)$ is self-adjoint if $T$ admits an adjoint operator $T^*$, and $T = T^*$. We can prove the following propositions by the classical way.

\begin{prop} \label{a2}
  Let $T \in \mathcal{L}(c_0)$ be a self-adjoint operator. Then, we have $\| T ^2\| = \|T\|^2$. In particular, the equality $\lim_{n \to \infty} \|T^n\|^{1/n} = \|T\|$ holds. 
\end{prop}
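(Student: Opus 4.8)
The plan is to establish the key identity $\|T^2\| = \|T\|^2$ first, and then derive the spectral-radius-type formula as a routine consequence. For the main identity, the inequality $\|T^2\| \le \|T\|^2$ is immediate from submultiplicativity of the operator norm, so the real content is $\|T\|^2 \le \|T^2\|$. Here I would exploit the inner product structure: since $k$ is formally real we have $\|x\|^2 = |\langle x, x\rangle|$ for all $x \in c_0$, and since $T = T^*$ we have $\langle Tx, Tx\rangle = \langle x, T^2 x\rangle$. Hence for every $x \in c_0$ with $\|x\| \le 1$,
\begin{align*}
  \|Tx\|^2 = |\langle Tx, Tx\rangle| = |\langle x, T^2 x\rangle| \le \|x\|\,\|T^2 x\| \le \|T^2\|,
\end{align*}
using the Cauchy--Schwarz-type bound $|\langle u,v\rangle| \le \|u\|\,\|v\|$, which itself follows from $\|\cdot\|$ being the norm attached to $\langle\cdot,\cdot\rangle$ (or directly from the ultrametric estimate on the defining series). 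Taking the supremum over such $x$ gives $\|T\|^2 \le \|T^2\|$, hence equality.

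Next I would bootstrap this to all powers of the form $2^m$: applying the identity to the self-adjoint operator $T^{2^{m-1}}$ (note that a power of a self-adjoint operator admitting an adjoint is again self-adjoint, since $(T^n)^* = (T^*)^n = T^n$) yields $\|T^{2^m}\| = \|T^{2^{m-1}}\|^2$, and by induction $\|T^{2^m}\| = \|T\|^{2^m}$. Consequently $\|T^{2^m}\|^{1/2^m} = \|T\|$ for every $m$. Since the sequence $\|T^n\|^{1/n}$ is well known to converge (its limit is the spectral radius, and convergence follows from submultiplicativity by the standard Fekete-type argument, $\|T^{n}\| \le \|T\|^{n}$ together with $\limsup_n \|T^n\|^{1/n} \le \|T^m\|^{1/m}$ for each fixed $m$), and since it has a subsequence constantly equal to $\|T\|$ while also $\|T^n\|^{1/n} \le \|T\|$ for all $n$, the full limit must be $\|T\|$.

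I do not expect a serious obstacle here; the argument is the exact transcription of the classical $C^*$-algebra computation into the non-Archimedean setting, and the only point requiring care is making sure the Cauchy--Schwarz-type inequality $|\langle x, y\rangle| \le \|x\|\,\|y\|$ is available. This is in fact elementary: writing $x = (x_n)$, $y = (y_n)$, the ultrametric inequality gives $|\langle x,y\rangle| = |\sum_n x_n y_n| \le \sup_n |x_n y_n| \le \|x\|\,\|y\|$, so no formal reality is even needed for that bound; formal reality is used only for $\|x\|^2 = |\langle x,x\rangle|$ and for the self-adjointness machinery to be meaningful. The one stylistic choice is whether to invoke the existence of $\lim_n \|T^n\|^{1/n}$ as known or to include the short submultiplicativity argument for it; I would state it briefly since it is standard.
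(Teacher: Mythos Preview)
Your argument is correct and follows essentially the same route as the paper: both use $\|Tx\|^2 = |\langle Tx,Tx\rangle| = |\langle T^2 x,x\rangle|$ and then bound this by $\|T^2\|$, the only cosmetic difference being that the paper passes through $\sup_{\|x\|,\|y\|\le 1}|\langle T^2 x,y\rangle| = \|T^2\|$ whereas you invoke the Cauchy--Schwarz-type bound directly. The paper leaves the ``in particular'' clause implicit, while you supply the standard $2^m$-subsequence argument, which is fine.
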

\begin{proof}
  The inequality $\| T ^2\| \le \|T\|^2$ is clear. On the other hand, we have
  \begin{align*}
    \|T\|^2 &= \sup_{\|x\| \le 1} \|T(x)\|^2 \\
    &= \sup_{\|x\| \le 1} | \langle T(x), T(x) \rangle | \\
    &= \sup_{\|x\| \le 1} | \langle T^*T(x), x \rangle | \\
    &\le \sup_{\|x\|, \|y\| \le 1} |\langle T^*T(x), y \rangle | \\
    &= \|T^*T\| = \|T^2\|.
  \end{align*}
\end{proof}

\begin{prop} \label{a4}
  Let $T \in \mathcal{L}(c_0)$ be a self-adjoint operator, and $M \subseteq c_0$ be a closed subspace that is normally complemented. Then, $T(M) \subseteq M$ if and only if $TP = PT$ where $P$ is a normal projection onto $M$. In particular, $T(M) \subseteq M$ implies $T(M^{\perp}) \subseteq M^{\perp}$
\end{prop}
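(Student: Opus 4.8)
The plan is to exploit the normal projection $P$ together with self-adjointness of $T$, using the elementary algebraic identities $P^2 = P$, $P^* = P$, and $T^* = T$.

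First I would prove the easy direction: if $TP = PT$, then for $x \in M$ we have $x = P(x)$, so $T(x) = T(P(x)) = P(T(x)) \in \mathrm{Im}\, P = M$, hence $T(M) \subseteq M$. (Here I use that $\mathrm{Im}\, P = M$, which follows from $P$ being a projection onto $M$.)

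For the converse, suppose $T(M) \subseteq M$. I would first check that $T(M^{\perp}) \subseteq M^{\perp}$: given $y \in M^{\perp}$ and any $x \in M$, compute $\langle x, T(y)\rangle = \langle T(x), y\rangle = 0$ since $T(x) \in M$ and $y \in M^{\perp}$; as $x \in M$ was arbitrary, $T(y) \in M^{\perp}$. Now take an arbitrary $x \in c_0$ and write it, using that $M$ is normally complemented, as $x = P(x) + (I - P)(x)$ with $P(x) \in M$ and $(I - P)(x) \in M^{\perp}$. Then $PT(x) = PT(P(x)) + PT((I-P)(x))$; the first term equals $T(P(x))$ because $T(P(x)) \in M$ and $P$ fixes $M$, and the second term vanishes because $T((I-P)(x)) \in M^{\perp}$ and $P$ kills $M^{\perp}$ (as $\mathrm{Ker}\, P = M^{\perp}$, again from normal complementedness and $P^* = P$). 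Meanwhile $TP(x) = T(P(x))$, so $PT(x) = TP(x)$ for all $x$, i.e. $TP = PT$. The ``in particular'' clause is exactly the computation already carried out for $T(M^{\perp}) \subseteq M^{\perp}$.

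The only slightly delicate point — the main obstacle, such as it is — is justifying that $P$ vanishes on $M^{\perp}$ and that $M^{\perp}$ is precisely the kernel of $P$; this needs the hypothesis that $M$ is normally complemented so that $c_0 = M \oplus M^{\perp}$ and $P$ is the projection along $M^{\perp}$, together with the self-adjointness of $P$. Once this is in place, everything reduces to formal manipulation of $P$, $I - P$, and $T$, with no analytic input beyond the symmetry of $\langle\cdot,\cdot\rangle$.
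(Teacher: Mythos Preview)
Your proposal is correct and follows essentially the same approach as the paper: both prove $T(M^{\perp}) \subseteq M^{\perp}$ first via the self-adjointness computation $\langle x, T(y)\rangle = \langle T(x), y\rangle = 0$, and then use the decomposition $c_0 = M \oplus M^{\perp}$ to match the $M$- and $M^{\perp}$-components of $TP$ and $PT$. The only cosmetic difference is that the paper phrases the last step as comparing the two decompositions $PT(z) + (I-P)T(z) = TP(z) + T(I-P)(z)$, whereas you apply $P$ to $T(P(x)) + T((I-P)(x))$ directly; these are the same argument.
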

\begin{proof}
  Let $P$ be a normal projection onto $M$. If $TP = PT$, then it is easy to see $T(M) \subseteq M$. Conversely, suppose $T(M) \subseteq M$. Then, for each $x \in M^{\perp}$, $y \in M$, we have $\langle y, T(x) \rangle = \langle T(y), x \rangle =0$. Since $y \in M$ is arbitrary, we obtain $T(x) \in M^{\perp}$, hence $T(M^{\perp}) \subseteq M^{\perp}$. For each $z \in c_0$, we have the trivial equality
  \begin{align*}
    PT(z) + (I - P)T(z) = T(z) = TP(z) + T(I - P)(z),
  \end{align*}
  and it follows from $T(M) \subseteq M, T(M^{\perp}) \subseteq M^{\perp}$ that $TP(z) = PT(z)$.
\end{proof}

\begin{prop} \label{a5}
  Let $T \in \mathcal{L}(c_0)$ be a self-adjoint operator, and let $\lambda_1, \lambda_2 \in \sigma_p(T)$ be distinct elements. Then, for each $x_1 \in \mathrm{Ker}(\lambda_1 I - T)$, $x_2 \in \mathrm{Ker}(\lambda_2 I - T)$, we have $\langle x_1, x_2 \rangle = 0$.
\end{prop}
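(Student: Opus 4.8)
The plan is to imitate the classical Hilbert-space argument; the only ingredients needed are the symmetry of the bilinear form $\langle \cdot, \cdot \rangle$ on $c_0$ and the self-adjointness $T = T^*$.

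First I would note that $x_i \in \mathrm{Ker}(\lambda_i I - T)$ simply means $T(x_i) = \lambda_i x_i$ for $i = 1,2$. Then I would evaluate the scalar $\langle T(x_1), x_2 \rangle$ in two ways: on one hand it equals $\langle \lambda_1 x_1, x_2 \rangle = \lambda_1 \langle x_1, x_2 \rangle$ by bilinearity; on the other hand, since $T$ is self-adjoint, it equals $\langle x_1, T(x_2) \rangle = \langle x_1, \lambda_2 x_2 \rangle = \lambda_2 \langle x_1, x_2 \rangle$. Subtracting the two expressions yields $(\lambda_1 - \lambda_2)\langle x_1, x_2 \rangle = 0$ in $K$, and since $K$ is a field and $\lambda_1 \neq \lambda_2$ the factor $\lambda_1 - \lambda_2$ is invertible, so $\langle x_1, x_2 \rangle = 0$.

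There is essentially no obstacle here: unlike the Archimedean case, we do not need any positivity or "reality" of the eigenvalues, since the bilinear form is $K$-valued and symmetric by definition, and the whole argument is purely algebraic. The one thing worth keeping in mind is that formal reality of $k$ plays no role in this particular statement — it is what guarantees that $\langle \cdot, \cdot \rangle$ is nondegenerate and induces the norm $\|\cdot\|$, which matters for later results, but the orthogonality relation $\langle x_1, x_2 \rangle = 0$ already follows from symmetry of the form together with $T = T^*$.
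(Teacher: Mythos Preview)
Your argument is correct and is exactly the ``classical way'' the paper alludes to; the paper does not spell out a proof of this proposition beyond that remark, and the standard computation $(\lambda_1-\lambda_2)\langle x_1,x_2\rangle=\langle T x_1,x_2\rangle-\langle x_1,T x_2\rangle=0$ you give is precisely what is intended.
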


For a formally real field $F$, we consider the condition $(H)_F$;

\begin{itemize}
  \item[$(H)_F$] For each $n \in \mathbb{N}$ and each symmetric matrix $A \in \mathcal{M}_n(F)$, $A$ is diagonalizable over $F$.
\end{itemize}
where $\mathcal{M}_n(F)$ is the set of all $n$-dimensional square matrices over $F$.

Before proving the main theorems, we introduce the Fredholm theory for compactoid operators proved in \cite{comp}.

\begin{prop}[{\cite[Corollary 3.3, Theorem 5.6]{comp}}] \label{a1}
  Let $T \in \mathcal{L}(E)$ be a compactoid operator. Then, we have the following: \\
  $(1)$ If $\lambda \in \sigma(T)$, $\lambda \neq 0$ then $\lambda \in \sigma_p(T)$ and $\mathrm{Ker}(\lambda I - T)$ is finite-dimensional.\\
  $(2)$ If $\lambda_1, \lambda_2, \cdots \in \sigma(T)$ are distinct, then $\lim_{n \to \infty} \lambda_n = 0$. \\
\end{prop}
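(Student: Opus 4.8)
The plan is to reproduce the non-Archimedean Riesz--Schauder theory for a compactoid operator; both assertions are contained in \cite[Corollary 3.3, Theorem 5.6]{comp}. For $\lambda \neq 0$ one writes $\lambda I - T = \lambda(I - \lambda^{-1}T)$ and observes that $\lambda^{-1}T$ is again a compactoid operator, so it suffices to analyse operators of the form $I - S$ with $S \in \mathcal{L}(E)$ compactoid. (When $E = c_0$, Theorem \ref{c1} shows moreover that such an $S$ is the operator-norm limit of its finite-rank truncations, so one may alternatively deduce everything by perturbing the elementary Fredholm theory of $I - (\text{finite rank})$.)

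For $(1)$, first $\mathrm{Ker}(I-S)$ is finite-dimensional: every $x$ in its unit ball satisfies $x = S(x)$, so the unit ball of $\mathrm{Ker}(I-S)$ lies inside the compactoid $S(B_E(0,1))$, and a Banach space over $K$ whose unit ball is a compactoid is finite-dimensional. Next one proves the Fredholm alternative for $I - S$: the range $(I-S)(E)$ is closed, and $I - S$ is injective if and only if it is surjective. Closedness of the range uses that the finite-dimensional kernel splits off a closed complement (in $c_0$ with $k$ formally real this complement can be taken normal, by \cite[Corollary 8.2]{inner}), on which $I - S$ is injective and behaves well under a compactoid estimate; the implication ``injective $\Rightarrow$ surjective'' is the Riesz argument on the chain of closed subspaces $E \supseteq (I-S)(E) \supseteq (I-S)^2(E) \supseteq \cdots$. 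If this chain were strictly decreasing, one picks, by the non-Archimedean Riesz lemma, unit vectors $z_n \in (I-S)^n(E)$ with $\mathrm{dist}\big(z_n,(I-S)^{n+1}(E)\big)$ bounded below; then the images $S(z_n) = z_n - (I-S)(z_n)$ form a norm-bounded-below family that is mutually ``orthogonal modulo the deeper subspaces of the chain'' and lies in the compactoid $S(B_E(0,1))$, which is impossible. Granting the alternative, if $\lambda \neq 0$ and $\lambda I - T$ is not invertible, then by the open mapping theorem it cannot be injective, i.e. $\lambda \in \sigma_p(T)$, and $\mathrm{Ker}(\lambda I - T) = \mathrm{Ker}(I - \lambda^{-1}T)$ is finite-dimensional by the first step.

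For $(2)$, suppose for contradiction that distinct $\lambda_1, \lambda_2, \ldots \in \sigma(T)$ do not converge to $0$; after passing to a subsequence assume $|\lambda_n| \geq \varepsilon > 0$ for all $n$. By $(1)$ each $\lambda_n$ is an eigenvalue; choose eigenvectors $v_n$ with $\|v_n\| = 1$. They are linearly independent, so $M_n := [v_1, \ldots, v_n]$ is a strictly increasing flag of finite-dimensional $T$-invariant subspaces with $(\lambda_n I - T)(M_n) \subseteq M_{n-1}$. Using the Riesz lemma (or, since $M_{n-1}$ is finite-dimensional and $k$ is formally real, a unit vector in the normal complement of $M_{n-1}$ in $M_n$), choose $w_n \in M_n$ with $\|w_n\| = 1$ and $\mathrm{dist}(w_n, M_{n-1}) \geq t$ for a fixed $t \in (0,1]$. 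For $m > n$, since $(\lambda_m I - T)(w_m) \in M_{m-1}$ and $T(w_n) \in M_n \subseteq M_{m-1}$, we get $T(w_m) - T(w_n) = \lambda_m w_m + u$ with $u \in M_{m-1}$, whence $\|T(w_m) - T(w_n)\| \geq |\lambda_m|\,\mathrm{dist}(w_m, M_{m-1}) \geq t\varepsilon$; the same computation shows $(T(w_n))$ has norms bounded below by $t\varepsilon$ and each $T(w_n)$ lies at distance $\geq t\varepsilon$ from $[T(w_1), \ldots, T(w_{n-1})] \subseteq M_{n-1}$, so this family cannot lie in the compactoid $T(B_E(0,1))$. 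This contradiction shows $\lambda_n \to 0$.

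The real obstacle in both parts is the passage from a Riesz-type separated (or ``triangular-orthogonal'') norm-bounded-below sequence to a contradiction with compactoidness: over $\mathbb{R}$ or $\mathbb{C}$ this is simply the total-boundedness of precompact sets, but over a non-Archimedean field a compactoid can contain infinite uniformly separated sets, so one must instead use the sharper fact that a compactoid contains no infinite $t$-orthogonal sequence with norms bounded away from $0$, and then arrange the Riesz construction so that the sequence it produces genuinely has that form. Handling this, together with the closed-range and open-mapping bookkeeping in the Fredholm alternative, is exactly where the work of \cite{comp} lies; over $c_0$ one can instead route the whole argument through the finite-rank approximations of Theorems \ref{c1} and \ref{c2} and the elementary Riesz theory of $I - (\text{finite rank})$, at the price of a perturbation argument for the index-zero property.
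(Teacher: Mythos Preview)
The paper gives no proof of this proposition at all: it is stated as a direct citation of \cite[Corollary~3.3, Theorem~5.6]{comp} and used as a black box. So there is no ``paper's own proof'' to compare against; your proposal is an expansion of what the reference contains rather than an alternative to anything in the present paper.

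As a sketch of the non-Archimedean Riesz--Schauder theory your outline is sound and hits the right points: the kernel is finite-dimensional because its unit ball sits inside a compactoid; the Fredholm alternative for $I-S$ follows from a Riesz descending-chain argument; and part~(2) is the standard eigenvector-flag argument. You also correctly flag the genuinely non-Archimedean issue, namely that a compactoid may contain an infinite uniformly separated set, so the contradiction has to come from the absence of an infinite $t$-orthogonal sequence with norms bounded away from~$0$ rather than from metric total boundedness. Two small remarks: the proposition is stated for a general Banach space $E$, so the parenthetical appeals to the normal complement in $c_0$ with $k$ formally real are not available in that generality (one uses instead that finite-dimensional subspaces are always complemented, or the general non-Archimedean Riesz lemma); and the phrase ``by the open mapping theorem it cannot be injective'' is really the Fredholm alternative you just established (injective $\Rightarrow$ surjective $\Rightarrow$ invertible), with the open mapping theorem only supplying continuity of the inverse at the end.
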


Also, for the proof, we use the results of section \ref{app}. For details, see section \ref{app}.

\begin{thm} \label{a3}
  Suppose that the residue class field $k$ of $K$ is formally real, and $K$ satisfies the condition $(H)_K$. Let $T \in \mathcal{L}(c_0)$ be a self-adjoint compactoid operator. Then, we have the following:\\
  $(1)$ If $K$ is densely valued, then we have
  \begin{align*}
    \|T\| = \max_{\lambda \in \sigma_p(T)} |\lambda|.
  \end{align*}
  $(2)$ If $K$ is discretely valued, then we have 
  \begin{align*}
    \|T\| \le |\pi|^{-1} \max_{\lambda \in \sigma_p(T)} |\lambda|,
  \end{align*}
  where $\pi \in B_K$ is a generating element of a maximal ideal of $B_K$.
\end{thm}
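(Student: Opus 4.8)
The plan is to mimic the classical argument that the operator norm of a self-adjoint compactoid operator is attained at an eigenvalue, using Proposition \ref{a2} to reduce the question to spectral radius considerations and the Fredholm theory (Proposition \ref{a1}) to extract an eigenvalue from the spectrum. First I would recall from Proposition \ref{a2} that $\lim_{n\to\infty}\|T^n\|^{1/n} = \|T\|$, so $\|T\|$ equals the spectral radius of $T$; hence there must exist $\lambda \in \sigma(T)$ with $|\lambda|$ as large as possible, and since $T$ is compactoid, by Proposition \ref{a1}(1) every nonzero spectral value is an eigenvalue and the accumulation statement in Proposition \ref{a1}(2) guarantees that the supremum of $|\lambda|$ over $\sigma_p(T)\setminus\{0\}$ is actually a maximum (only finitely many eigenvalues exceed any given positive bound). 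The condition $(H)_K$ and the results of section \ref{app} enter to guarantee that the relevant finite-dimensional restrictions of $T$ can be diagonalized, so that one does not merely know the spectral radius abstractly but can identify it with $\max_{\lambda\in\sigma_p(T)}|\lambda|$; in particular one should invoke the invariant-subspace machinery (Proposition \ref{a4}) to split off the finite-dimensional pieces where $T$ acts through a symmetric matrix over $K$.

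The heart of the matter is relating $\|T\|$ to the largest $|\lambda|$ with $\lambda$ an eigenvalue, and this is where the dense/discrete dichotomy appears. In the densely valued case I would argue as follows: given $\varepsilon>0$, choose a unit vector $x$ with $\|T(x)\|$ close to $\|T\|$; using normal complementation of finite-dimensional subspaces (Theorem \ref{c3}) together with $(H)_K$, restrict $T$ to an invariant finite-dimensional subspace $M$ containing $x$ and its images under iterates of $T$ (made finite-dimensional by compactoidness), diagonalize $T|_M$ by a $K$-orthonormal basis, and read off that $\|T|_M\| = \max_i|\mu_i|$ where the $\mu_i$ are eigenvalues of $T$. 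Since the value group is dense, one can absorb the loss coming from approximating $\|T\|$ by $\|T|_M\|$, giving $\|T\| = \max_{\lambda\in\sigma_p(T)}|\lambda|$. In the discretely valued case the same construction works but the approximation of $\|T\|$ by norms of finite-dimensional restrictions, or the choice of a near-maximizing vector, loses a factor of $|\pi|$ because the value group is discrete and one cannot get arbitrarily close; this yields only the inequality $\|T\|\le|\pi|^{-1}\max_{\lambda\in\sigma_p(T)}|\lambda|$.

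The main obstacle I anticipate is making the reduction to finite-dimensional diagonalizable pieces completely rigorous: one needs that for a self-adjoint compactoid $T$ and a fixed vector $x$, there is a $T$-invariant finite-dimensional normally complemented subspace on which $\|T\|$ is nearly realized. Compactoidness gives that $\{T^n x\}$ generates a space whose closure behaves like a compactoid (so approximable in finite dimensions), and self-adjointness plus Proposition \ref{a4} lets one pass to genuinely invariant subspaces; but care is needed because in the non-Archimedean setting the closure of $[\{T^n x : n\ge 0\}]$ need not be finite-dimensional, so one must instead work with finite-dimensional subspaces approximating it to within any $r\in|K|$ and control how much the norm and the eigenvalue structure can shift. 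I would handle this by combining the decomposition of $c_0$ into $\mathrm{Ker}(\lambda I - T)$-eigenspaces for $|\lambda|$ above a threshold (finite in number, finite-dimensional, pairwise normal by Proposition \ref{a5}) with an estimate on the complementary part where $\|T\|$ is small; the threshold is chosen so that the complementary part contributes less than (in the dense case) or at most $|\pi|^{-1}$ times (in the discrete case) the top eigenvalue modulus. This eigenspace decomposition, rather than a single Krylov-type subspace, is likely the cleanest route and is exactly what the Fredholm theory of section \ref{app} is set up to provide.
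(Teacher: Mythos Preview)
Your proposal has a genuine gap: you treat the identity $\lim_{n\to\infty}\|T^n\|^{1/n}=\sup_{\lambda\in\sigma(T)}|\lambda|$ as if it were automatic once Proposition~\ref{a2} gives $\lim_{n}\|T^n\|^{1/n}=\|T\|$. In the non-Archimedean setting this spectral radius formula can fail, and establishing it is precisely the content of the theorem. The paper's argument is not about choosing near-maximizing vectors or building invariant subspaces; it works on the resolvent side. One writes $T=\sum_{i,j}a_{i,j}(e_j'\otimes e_i)$ and takes the finite truncations $T_n=\sum_{1\le i,j\le n}a_{i,j}(e_j'\otimes e_i)$, which converge to $T$ in norm by Theorem~\ref{c2}. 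Each $T_n$ is a symmetric $n\times n$ matrix, so condition $(H)_K$ makes it diagonalizable over $K$; hence $(I-\lambda T_n)^{-1}$ is analytic on every ball in $D_{T_n}$. Theorem~\ref{app7} supplies the uniform bound $M_r<\infty$ for $T$, so Theorem~\ref{app2} applies and $(I-\lambda T)^{-1}$ is itself analytic on $B_K(0,r)$ for every $r\in D_T$. Corollary~\ref{app3} then yields the two cases of the spectral radius formula, and the dense/discrete dichotomy with the factor $|\pi|^{-1}$ comes out of that corollary, not from any loss in approximating $\|T\|$ by finite-dimensional data.

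Your alternative route via the eigenspace decomposition is circular. You propose to split off the finitely many eigenspaces with $|\lambda|$ above a threshold and assert that on the normal complement $T$ has small norm; but bounding $\|TQ_n\|$ by the largest remaining eigenvalue modulus is exactly the statement of Theorem~\ref{a3} applied to the self-adjoint compactoid operator $TQ_n$. This is why the paper runs that decomposition argument only in Theorem~\ref{a6}, \emph{after} Theorem~\ref{a3} is available. Likewise, the Krylov-type idea (an invariant finite-dimensional subspace containing a near-maximizer) does not go through: for a general compactoid $T$ the span of $\{T^nx:n\ge0\}$ need not be finite-dimensional, and cutting it off destroys $T$-invariance, so you cannot read off eigenvalues of $T$ from the restriction. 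The role of $(H)_K$ in the paper is not to diagonalize $T$ on invariant pieces but to diagonalize the non-invariant matrix truncations $T_n$, which is enough to force analyticity of their resolvents and feed into the appendix machinery.
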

\begin{proof}
  We write $T := \sum_{i,j} a_{i,j} \cdot (e_j' \otimes e_i)$, and let $T_n := \sum_{1 \le i,j \le n} a_{i,j} \cdot (e_j' \otimes e_i)$. Then, by Theorem \ref{c2}, we have $\lim_{n \to \infty} \|T_n - T\| = 0$. Moreover, it follows from the condition $(H)_K$ that for each $n \in \mathbb{N}$ and each $r \in D_{T_n}$, the function 
  \begin{align*}
    \lambda \mapsto (I - \lambda T_n)^{-1}
  \end{align*}
  is analytic in $B_K(0,r)$. Therefore, combining these facts with Theorem \ref{app7}, we have that $T_1, T_2, \cdots$ and $T$ satisfy the hypotheses of Theorem \ref{app2}. Hence, by Corollary \ref{app3}, we have the following: \\
  $(1)$ If $K$ is densely valued, then $\lim_{n \to \infty} \|T^n\|^{1/n} = \sup_{\lambda \in \sigma(T)} |\lambda|$. \\
  $(2)$ If $K$ is discretely valued, then $\lim_{n \to \infty} \|T^n\|^{1/n} \le |\pi|^{-1} \sup_{\lambda \in \sigma(T)} |\lambda|$. \\
  Moreover, it follows from Proposition \ref{a2} that $\lim_{n \to \infty} \|T^n\|^{1/n}$ is equal to $\|T\|$, and by Proposition \ref{a1}, we have $\sup_{\lambda \in \sigma(T)} |\lambda| = \max_{\lambda \in \sigma_p(T)} |\lambda|$. This completes the proof.
\end{proof}

\begin{thm} \label{a6}
  With the same hypotheses as those of Theorem \ref{a3}, there exist an orthonormal sequence $x_1, x_2, \cdots \in c_0$ and $(\lambda_n) \in c_0$ such that 
  \begin{align*}
    T(x) = \sum_{n=1}^\infty \lambda_n \frac{\langle x, x_n \rangle}{\langle x_n, x_n \rangle} x_n.
  \end{align*}
\end{thm}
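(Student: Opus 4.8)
The plan is to build the orthonormal sequence by peeling off one eigenvalue of largest modulus at a time, using Theorem \ref{a3} to guarantee that each step extracts a nonzero eigenvalue (unless $T$ is already trivial on what remains). First I would invoke Proposition \ref{a1}: since $T$ is a nonzero compactoid operator, $\sigma(T)\setminus\{0\}$ consists of eigenvalues, each of finite multiplicity, accumulating only at $0$; in particular $\max_{\lambda\in\sigma_p(T)}|\lambda|$ is attained, and by Theorem \ref{a3} this maximum (up to the harmless factor $|\pi|^{-1}$ in the discretely valued case) controls $\|T\|$, so it is positive as long as $T\neq 0$. Pick a nonzero eigenvalue $\lambda_1$ of largest modulus and an eigenvector $x_1\in\mathrm{Ker}(\lambda_1 I-T)$; normalize so that $\langle x_1,x_1\rangle\neq 0$, which is legitimate because $k$ is formally real forces $|\langle x_1,x_1\rangle|=\|x_1\|^2\neq 0$.

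Next I would pass to the orthogonal complement. Let $M_1:=[x_1]$; this is a one-dimensional, hence normally complemented, subspace (Theorem \ref{c3} and the preceding theorems), with normal projection $P_1(x)=\frac{\langle x,x_1\rangle}{\langle x_1,x_1\rangle}x_1$. Since $T(M_1)\subseteq M_1$, Proposition \ref{a4} gives $T(M_1^{\perp})\subseteq M_1^{\perp}$, so $T$ restricts to a self-adjoint compactoid operator $T|_{M_1^{\perp}}$ on the Banach space $M_1^{\perp}$. The point is that $M_1^{\perp}$ is again isometrically (orthogonally) isomorphic to a $c_0$-space over $K$: indeed $c_0=M_1\oplus M_1^{\perp}$ as an orthogonal direct sum, and the norm on $M_1^{\perp}$ is still given by the restricted inner product with the formally-real property intact. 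Iterating, I obtain nonzero eigenvalues $\lambda_1,\lambda_2,\dots$ with $|\lambda_1|\geq|\lambda_2|\geq\cdots$ and a mutually orthonormal sequence $x_1,x_2,\dots$ with $T(x_n)=\lambda_n x_n$, stopping only if at some finite stage the restricted operator becomes $0$ (in which case one fills out the formula trivially by taking the remaining $\lambda_n=0$ and any orthonormal completion, or simply notes the sum is finite).

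It then remains to show two things: that $\lambda_n\to 0$, i.e.\ $(\lambda_n)\in c_0$, and that the asserted series formula actually reconstructs $T$. The first is immediate from Proposition \ref{a1}(2), since the $\lambda_n$ are distinct on each eigenspace only up to multiplicity, but repeated eigenvalues have finite multiplicity, so the sequence still tends to $0$. For the second, let $Q_N$ be the normal projection onto $[x_1,\dots,x_N]$, explicitly $Q_N(x)=\sum_{n=1}^N\frac{\langle x,x_n\rangle}{\langle x_n,x_n\rangle}x_n$ by Theorem \ref{c3}. By construction $T$ commutes with $Q_N$ (Proposition \ref{a4}), and $T Q_N(x)=\sum_{n=1}^N\lambda_n\frac{\langle x,x_n\rangle}{\langle x_n,x_n\rangle}x_n$, while $T(I-Q_N)$ is the operator $T|_{[x_1,\dots,x_N]^{\perp}}$ lifted back, whose norm is at most $|\pi|^{-1}\max_{n>N}|\lambda_n|$ (or $\max_{n>N}|\lambda_n|$ in the dense case) by Theorem \ref{a3} applied to that restriction. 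Since $\|x\|\leq 1$ implies $\|(I-Q_N)x\|\leq\|x\|$ (orthogonality of the decomposition), we get $\|T(x)-TQ_N(x)\|\to 0$ uniformly on the unit ball, which is exactly convergence of the series to $T(x)$ for every $x$. The main obstacle I anticipate is the bookkeeping in the induction: one must check carefully that at each stage the restricted space really is a $c_0$-type space with a formally-real-residue inner product so that Theorem \ref{a3} and the results of Section 1 apply verbatim, and that the extracted eigenvalues are genuinely of non-increasing modulus globally (not just locally), which is where the accumulation statement in Proposition \ref{a1}(2) is essential.
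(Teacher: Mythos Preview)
Your overall architecture---extract eigenvectors, project off, control the remainder via Theorem~\ref{a3}---is the same as the paper's, but there is one genuine gap at the point you yourself flag as ``the main obstacle.'' You apply Theorem~\ref{a3} to the \emph{restriction} $T|_{M^{\perp}}$, and for that you need $M^{\perp}$ to carry the full $c_0$-structure (in particular, a form-orthonormal Schauder basis so that the matrix argument in the proof of Theorem~\ref{a3}, which uses the canonical $e_i$'s and condition $(H)_K$ on the truncations $T_n$, goes through). The paper only supplies Gram--Schmidt for \emph{finite-dimensional} subspaces, and nowhere proves that the normal complement of a finite-dimensional subspace is again form-isometric to $c_0$; establishing this would be an additional, nontrivial lemma.

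The paper sidesteps the issue completely: instead of restricting, it works with the operator $T Q_n$ on all of $c_0$, where $Q_n = I - P_n$ is the normal projection onto $N_n^{\perp}$. Because $T$ commutes with $P_n$ (Proposition~\ref{a4}), one has $(T Q_n)^* = Q_n^* T^* = Q_n T = T Q_n$, so $T Q_n$ is a self-adjoint compactoid operator on $c_0$ itself, and Theorem~\ref{a3} applies verbatim with no change of ambient space. The paper also organizes the induction slightly differently---grouping all eigenvalues of a given modulus $r_n$ at once and letting $N_n$ be the sum of the corresponding eigenspaces---but that is cosmetic; the substantive simplification is staying on $c_0$ throughout. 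Your argument becomes correct if you replace ``apply Theorem~\ref{a3} to $T|_{[x_1,\dots,x_N]^{\perp}}$'' by ``apply Theorem~\ref{a3} to $T(I-Q_N)$ on $c_0$,'' noting that $\sigma_p\bigl(T(I-Q_N)\bigr)\setminus\{0\}=\sigma_p(T)\setminus\{\lambda_1,\dots,\lambda_N\}$.
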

\begin{proof}
  We may assume $T \neq 0$. Then, by Theorem \ref{a3}, we have $\sigma_p(T) \setminus \{0\} \neq \emptyset$. By Proposition \ref{a1}, there exists a decreasing sequence $(r_n)_{1 \le n \le N}$ $(N \in \mathbb{N} \cup \{\infty\})$ of positive numbers such that 
  \begin{align*}
    \{ |\lambda| : \lambda \in \sigma_p(T) \setminus \{0\}  \} = \{r_n : 1 \le n \le N\}.
  \end{align*}
Moreover, we have $\lim_{n \to \infty} r_n = 0$ if $N = \infty$. \par
 For each $n \in \mathbb{N}$, we put $\{ \lambda_{n 1}, \cdots , \lambda_{n m_n}\} = \{ \lambda \in \sigma_p(T) : |\lambda| = r_n\}$ and 
  \begin{align*}
     N_n = \sum_{1 \le l \le n} \sum_{1\le k \le m_l} \text{Ker} (\lambda_{lk} I - T).
  \end{align*}
  Then, we easily have $T(N_n) \subseteq N_n$. We shall prove the theorem in the case $N = \infty$ (If $N < \infty$, the same discussion works). By Proposition \ref{a1}, $N_n$ is finite-dimensional and therefore, it follows from Theorem \ref{c3} that there exists a normal projection $P_n$ onto $N_n$. \par
  For each $n \in \mathbb{N}$, by Proposition \ref{a4} and \ref{a5}, we have
  \begin{align*}
    \sigma_p(T) = \sigma_p(TQ_n) \cup \sigma_p(TP_n) \, \text{and} \,  \sigma_p(TQ_n) = \sigma_P(T) \setminus (\bigcup_{1 \le l \le n} \bigcup_{1\le k \le m_l} \{ \lambda _{lk} \})
  \end{align*}
  where $Q_n := I - P_n$ is a normal projection onto $N_n^{\perp}$. Since $PQ_n$ is a self-adjoint compactoid operator, we have 
  \begin{align*}
    \|TQ_n\| \le C \cdot \max_{\lambda \in \sigma_p(TQ_n)} |\lambda| \le C r_{n+1}
  \end{align*}
  by Theorem \ref{a3} where $C$ is a suitable constant independent of $n$. In particular, we obtain $\lim_{n \to \infty}\|TQ_n\| = 0$ and therefore, we have $T(x) = \lim_{n \to \infty}TP_n(x)$ for each $x \in c_0$. \par
  Finally, for each $l \in \mathbb{N}$, $1 \le k \le m_l$, let $\{ x_{lkj} : 1 \le j \le p_{lk} \}$ be an orthonormal basis of $\text{Ker} (\lambda_{lk} I - T)$. Then, by Theorem \ref{c3}, and Proposition \ref{a5}, $P_n(x)$ can be represented by
 \begin{align*} 
   P_n(x) = \sum_{1 \le l \le n} \sum_{1 \le k \le m_l} \sum_{1 \le j \le p_{lk}} \frac{\langle x, x_{lkj} \rangle}{\langle x_{lkj}, x_{lkj} \rangle} x_{lkj}.
  \end{align*} 
  Hence, we have
  \begin{align*}
    T(x) = \sum_{l=1}^{\infty} \sum_{1 \le k \le m_l} \sum_{1 \le j \le p_{lk}} \lambda_{lk} \frac{\langle x, x_{lkj} \rangle}{\langle x_{lkj}, x_{lkj} \rangle} x_{lkj},
  \end{align*}
  which completes the proof.
\end{proof}

By the above theorem, we easily have the following corollary which refines Theorem \ref{a3}.

\begin{cor} \label{a7}
  With the same hypotheses as those of Theorem \ref{a3}, if $K$ is discretely valued, then we have 
  \begin{align*}
    \|T\| = \max_{\lambda \in \sigma_p(T)} |\lambda|.
  \end{align*}
\end{cor}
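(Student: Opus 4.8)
The plan is to upgrade the inequality of Theorem~\ref{a3}(2) to an equality by reading the operator norm off the diagonal expansion of $T$ supplied by Theorem~\ref{a6}, which is available under exactly the hypotheses assumed here. One inequality is immediate: for any $\lambda \in \sigma_p(T)$ and any $x \in \mathrm{Ker}(\lambda I - T) \setminus \{0\}$ we have $|\lambda|\,\|x\| = \|T(x)\| \le \|T\|\,\|x\|$, hence $\|T\| \ge \max_{\lambda \in \sigma_p(T)} |\lambda|$. If $T = 0$ the claimed equality is trivial, so I would assume $T \neq 0$; then Theorem~\ref{a3} forces $\sigma_p(T) \setminus \{0\} \neq \emptyset$, and since distinct elements of $\sigma_p(T) \subseteq \sigma(T)$ tend to $0$ by Proposition~\ref{a1}(2), the maximum on the right-hand side is genuinely attained.

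For the reverse inequality I would invoke the representation
\begin{align*}
  T(x) = \sum_{n=1}^{\infty} \lambda_n \frac{\langle x, x_n \rangle}{\langle x_n, x_n \rangle}\, x_n
\end{align*}
from Theorem~\ref{a6}, with $(x_n)$ an orthonormal sequence and $(\lambda_n) \in c_0$. Evaluating at $x = x_m$ and using orthonormality (note $\langle x_m, x_m \rangle \neq 0$, since $\|x_m\|^2 = |\langle x_m, x_m \rangle|$ and $x_m \neq 0$) gives $T(x_m) = \lambda_m x_m$, so every $\lambda_n$ lies in $\sigma_p(T)$, and hence $\sup_n |\lambda_n| \le \max_{\lambda \in \sigma_p(T)} |\lambda|$. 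Now fix $x \in c_0$. Since $k$ is formally real we have $\|x_n\|^2 = |\langle x_n, x_n \rangle|$, which together with the elementary estimate $|\langle x, x_n \rangle| \le \|x\|\,\|x_n\|$ yields
\begin{align*}
  \left\| \lambda_n \frac{\langle x, x_n \rangle}{\langle x_n, x_n \rangle}\, x_n \right\|
  = |\lambda_n|\, \frac{|\langle x, x_n \rangle|}{\|x_n\|}
  \le |\lambda_n|\, \|x\|
\end{align*}
for every $n$; the right-hand side tends to $0$ because $(\lambda_n) \in c_0$, so the series converges and the ultrametric inequality for convergent series gives
\begin{align*}
  \|T(x)\| \le \sup_{n} \left\| \lambda_n \frac{\langle x, x_n \rangle}{\langle x_n, x_n \rangle}\, x_n \right\|
  \le \Big( \sup_{n} |\lambda_n| \Big)\, \|x\|
  \le \Big( \max_{\lambda \in \sigma_p(T)} |\lambda| \Big) \|x\|.
\end{align*}
Taking the supremum over $\|x\| \le 1$ gives $\|T\| \le \max_{\lambda \in \sigma_p(T)} |\lambda|$, which together with the first inequality proves the corollary.

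I do not anticipate a genuine obstacle: the statement is a routine consequence of Theorem~\ref{a6}. The only point requiring a little care is the handling of the infinite expansion — that the $n$-th term has norm tending to $0$, so that both its convergence and the ultrametric norm estimate apply — but this is precisely the displayed bound $\|\lambda_n \langle x, x_n\rangle\langle x_n, x_n\rangle^{-1} x_n\| \le |\lambda_n|\,\|x\|$, which uses only $(\lambda_n) \in c_0$ together with the identity $\|y\|^2 = |\langle y, y\rangle|$. It is worth noting that discreteness of the valuation plays no role whatsoever in this argument; the corollary is phrased for discretely valued $K$ merely because the densely valued case is already contained in Theorem~\ref{a3}(1).
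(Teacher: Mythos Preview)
Your argument is correct and follows exactly the route the paper intends: the corollary is stated immediately after Theorem~\ref{a6} with the remark that it ``easily'' follows from that theorem, and your computation is precisely the expansion of this remark, estimating the norm of $T$ term by term from the orthonormal representation. Your observation that the discreteness hypothesis is not actually used---the result being stated only for discretely valued $K$ because the densely valued case is already Theorem~\ref{a3}(1)---is also accurate.
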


\begin{rem}
  Theorem \ref{a6} is the modified result of \cite[Theorem 4.3]{sp}. The fifth step of the proof of \cite[Theorem 4.3]{sp} is wrong. Moreover, it is clear that the condition $(H)_K$ is necessary for the theorem, but no condition is imposed on $K$ in \cite[Theorem 4.3]{sp}. Similarly, the proof of \cite[Theorem 10]{lev} is wrong. On the other hand, we can apply a similar method of this paper to \cite[Theorem 10]{lev}.
\end{rem}

\section{The condition $(H)_K$} \label{b}

In this section, we study the condition $(H)_K$. For a formally real field $F$, a matrix $U \in \mathcal{M}_n(F)$, $U$ is called an orthogonal matrix if its transpose $A^*$ is equal to the inverse $A^{-1}$.

\begin{prop} \label{b1}
  Let $F$ be a formally real field. Then, $F$ satisfies the condition $(H)_F$ if and only if $F$ satisfies the condition $(H')_F$;
  \begin{itemize}
    \item[$(H')_F$] for each $n \in \mathbb{N}$ and each symmetric matrix $A \in \mathcal{M}_n(F)$, $A$ can be diagonalized by an orthogonal matrix over $F$.
  \end{itemize}
\end{prop}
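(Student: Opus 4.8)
The implication $(H')_F\Rightarrow(H)_F$ is immediate, since an orthogonal matrix is in particular invertible. So the content of the proposition is the reverse implication $(H)_F\Rightarrow(H')_F$, and the only step that is not completely routine linear algebra is the passage from an \emph{orthogonal} eigenbasis to an \emph{orthonormal} one; this forces one first to understand square roots of sums of squares in $F$.

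The plan is to begin by showing that $(H)_F$ already forces $F$ to be Pythagorean, i.e.\ that every finite sum of squares in $F$ is again a square. By an easy induction this reduces to a sum of two squares $a^2+b^2$ with $a,b\in F$ (the case where this vanishes being trivial). The key device here is the symmetric matrix $A=\begin{pmatrix} a & b \\ b & -a\end{pmatrix}\in\mathcal{M}_2(F)$, for which a one-line computation gives $A^2=(a^2+b^2)I_2$. Applying $(H)_F$, write $A=SDS^{-1}$ with $D=\mathrm{diag}(d_1,d_2)$; then $D^2=S^{-1}A^2S=(a^2+b^2)I_2$, so $a^2+b^2=d_1^2$ is a square in $F$.

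Then, given a symmetric matrix $A\in\mathcal{M}_n(F)$, I would run the classical diagonalization argument. By $(H)_F$, $F^n$ is the direct sum of the eigenspaces $V_\lambda=\mathrm{Ker}(\lambda I-A)$ over the finitely many distinct eigenvalues $\lambda$ of $A$. For $v\in V_\lambda$ and $w\in V_\mu$ with $\lambda\neq\mu$, the symmetry of $A$ gives $\lambda\langle v,w\rangle=\langle Av,w\rangle=\langle v,Aw\rangle=\mu\langle v,w\rangle$, hence $\langle v,w\rangle=0$: distinct eigenspaces are mutually orthogonal. On each $V_\lambda$ the form $\langle\cdot,\cdot\rangle$ is anisotropic, because $\langle x,x\rangle=\sum_i x_i^2=0$ forces $x=0$ by formal reality; so the Gram--Schmidt procedure applies inside $V_\lambda$ and produces an orthogonal basis $v_1,\dots,v_k$ with each $\langle v_i,v_i\rangle\neq0$. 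Since $\langle v_i,v_i\rangle$ is a nonzero sum of squares in $F$, the Pythagorean property established above gives $\langle v_i,v_i\rangle=c_i^2$ for some $c_i\in F^\times$, and replacing $v_i$ by $c_i^{-1}v_i$ normalizes it; thus each $V_\lambda$ acquires an orthonormal basis.

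Finally, the union of these orthonormal bases over all eigenvalues is a basis of $F^n$ consisting of eigenvectors of $A$ that is orthonormal (within each eigenspace by construction, across eigenspaces by the orthogonality just noted). The matrix $U$ whose columns are these vectors then satisfies $U^*U=I$, i.e.\ $U^*=U^{-1}$, so $U$ is orthogonal, and $U^{-1}AU$ is diagonal; this is exactly $(H')_F$. I expect the main obstacle to be the normalization, namely realizing that $(H)_F$ by itself already forces the Pythagorean property of $F$ and pinning down the matrix $\begin{pmatrix} a & b \\ b & -a\end{pmatrix}$ that witnesses it; once that is in hand, the remainder is the standard spectral theorem for symmetric matrices, made valid over $F$ by the anisotropy of $\langle\cdot,\cdot\rangle$.
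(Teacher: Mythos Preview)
Your proof is correct and follows essentially the same strategy as the paper: first use $(H)_F$ on a well-chosen $2\times 2$ symmetric matrix to force $F$ to be Pythagorean, then apply Gram--Schmidt (valid by anisotropy of the form over a formally real field) to upgrade an eigenbasis to an orthonormal one. The only difference is cosmetic: the paper uses $\begin{pmatrix} 0 & b/4 \\ b/4 & a \end{pmatrix}$ and reads off $\sqrt{a^2+b^2}\in F$ from the eigenvalues, whereas your matrix $\begin{pmatrix} a & b \\ b & -a \end{pmatrix}$ squares to $(a^2+b^2)I_2$ and gives the same conclusion a bit more directly.
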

\begin{proof}
  Suppose that $F$ satisfies the condition $(H)_F$. Then, for each $a, b \in F$, a symmetric matrix
  \begin{align*}
  \begin{pmatrix}
    0 & \frac{b}{4} \\
    \frac{b}{4} & a \\
  \end{pmatrix}
  \end{align*}
  is diagonalizable over $F$. Hence, we have $\sqrt{a^2 + b^2} \in F$, and by induction, for any finite subset $\{a_1, \cdots, a_n \} \subseteq F$, we have $\sqrt{a_1^2 + \cdots + a_n^2} \in F$. Let $A \in \mathcal{M}_n(F)$ be a symmetric matrix. Then, by the hypothesis, there exists a subset $\{x_1, \cdots, x_n \} \subseteq F^n$ whose linear span is equal to $F^n$ such that each $x_i$ is an eigenvector of $A$. Since $A$ is symmetric, using the Gram-Schmidt procedure, we can choose $x_1, \cdots, x_n$ satisfying that a matrix $U := (x_1, \cdots, x_n)$ is an orthogonal matrix.
\end{proof}

Let $F$ be a formally real field, and let $(\Gamma, \le)$ be a totally ordered abelian group. A subset $\{c_{\alpha. \beta}\}_{(\alpha,\beta) \in \Gamma \times \Gamma} \subseteq F^*$ indexed by $\Gamma \times \Gamma$ is called a factor set if it satisfies
\begin{itemize}
  \item $c_{0,0} = c_{0,\gamma} = c_{\gamma,0} = 1$,
  \item $c_{\alpha, \beta} = c_{\beta, \alpha}$,
  \item $c_{\alpha, \beta} c_{\alpha + \beta, \gamma} = c_{\alpha, \beta + \gamma} c_{\beta, \gamma}$
\end{itemize}
for each $\alpha, \beta, \gamma \in \Gamma$. We denote by $F((\Gamma, c_{\alpha, \beta}))$ the Hahn-field defined by a factor set $\{c_{\alpha, \beta}\}$:
\begin{gather*}
  F((\Gamma, c_{\alpha, \beta})) := \big\{f : \Gamma \to F : \text{supp}f := \{\gamma \in \Gamma : f(\gamma) \neq 0\} \, \text{is a well-ordered set} \big\}, \\
  f \cdot g (\gamma) := \sum_{\alpha + \beta = \gamma}f(\alpha)g(\beta) c_{\alpha, \beta} \quad \big(f,g \in F((\Gamma, c_{\alpha, \beta})) \big).
\end{gather*}
The Hahn-field $F((\Gamma, c_{\alpha, \beta}))$ is maximally complete with respect to a general valuation $V(f) := \min \text{supp}f$, $f \in F((\Gamma, c_{\alpha, \beta}))$ (c.f. \cite{val}). The next theorem is an extension of \cite[Theorem 1]{spect}.

\begin{thm} \label{b2}
  Put $L := F((\Gamma, c_{\alpha, \beta}))$, and suppose that $F$ satisfies the condition $(H')_F$. Then, $L$ satisfies the condition $(H')_L$.
\end{thm}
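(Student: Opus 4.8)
The plan is to reduce the orthogonal‑diagonalization statement $(H')_L$ to the mere existence of an eigenvalue in $L$ for every symmetric matrix, and then to manufacture such an eigenvalue by a transfinite successive approximation over the residue field, whose convergence is exactly the maximal completeness of $L$.

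\emph{Reductions.} A totally ordered abelian group is torsion‑free, so — writing $V(f^2)=2V(f)$ and using that the lowest‑degree coefficient of a sum of squares in $L$ is a unit multiple of a sum of squares in $F$, which vanishes only when all terms do since $F$ is formally real — the field $L$ is formally real; hence the standard form $\langle x,y\rangle=\sum x_iy_i$ on $L^n$ is anisotropic, i.e. a nonzero vector has nonzero length. By the proof of Proposition \ref{b1}, $(H')_F$ forces $F$ to be Pythagorean (diagonalizability of $\bigl(\begin{smallmatrix}a&b\\ b&-a\end{smallmatrix}\bigr)$ puts $\sqrt{a^2+b^2}$ in $F$, and one proceeds by induction), and since $L$ is maximally complete it is Henselian, so the property lifts: a nonzero sum of squares in $L$ has value in $2\Gamma$, and after dividing by the square of an element of half that value one is left with a unit whose reduction is a nonzero sum of squares in $F$, hence a square there, hence a square in $L$ by Hensel; so $L$ is Pythagorean. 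Consequently, by Proposition \ref{b1} it suffices to prove $(H)_L$, and by induction on $n$ this follows from the claim that every symmetric $A\in\mathcal{M}_n(L)$ has an eigenvalue in $L$: given an eigenvalue $\mu$ with eigenvector $v$, anisotropy gives $\langle v,v\rangle\neq 0$ and $L^n=Lv\oplus v^{\perp}$ with both summands $A$‑invariant, and Gram--Schmidt together with Pythagoreanness yields an orthonormal basis of $v^{\perp}$ in which $A|_{v^{\perp}}$ is a symmetric matrix of size $n-1$, diagonalizable by the inductive hypothesis.

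\emph{Existence of an eigenvalue.} I prove this by a second induction on $n$. Rescaling, assume $A$ has entries in $B_L$ and is primitive (the minimum of the values of its entries is $0$), and reduce modulo the maximal ideal of $B_L$ to get a nonzero symmetric matrix $\bar A$ over $F$; by $(H')_F$ it is orthogonally diagonalizable over $F$. If $\bar A$ has at least two distinct eigenvalues, then $\overline{\chi_A}$ is a product of pairwise coprime factors each of degree $<n$, and Hensel's lemma (lifting idempotents in $\mathcal{M}_n(B_L)$) lifts this to an $A$‑invariant decomposition $B_L^n=\bigoplus_i W_i$ with $\chi_{A|_{W_i}}$ the monic lift of the $i$‑th factor; an orthonormal basis of $W_1\otimes_{B_L}L$ turns $A|_{W_1}$ into a symmetric matrix of size $<n$, which has an eigenvalue in $L$ by the inductive hypothesis, and this is an eigenvalue of $A$. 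If instead $\bar A$ has a single eigenvalue $\bar\mu$, then diagonalizability over $F$ forces $\bar A=\bar\mu I$ (a nonzero scalar, so $\bar\mu\neq 0$ and any lift is a unit); putting $\gamma:=V(A-\mu_0I)>0$ (positive because $\overline{A-\mu_0I}=0$; and $\gamma\in\Gamma$, the case $A=\mu_0I$ being immediate) and choosing $\pi\in L^{*}$ of value $\gamma$, the matrix $A':=\pi^{-1}(A-\mu_0I)$ is again symmetric, integral and primitive, and an eigenvalue $\nu$ of $A'$ yields the eigenvalue $\mu_0+\pi\nu$ of $A$. So one recurses.

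\emph{Termination, and where maximal completeness enters.} The single‑eigenvalue case may repeat transfinitely, and controlling this is the heart of the matter. It produces a strictly increasing sequence of ``offsets'' $\delta_0<\delta_1<\cdots$ in $\Gamma$ together with partial approximations $s_\alpha\in L$ satisfying $V(s_{\alpha+1}-s_\alpha)=\delta_\alpha$ (the lifts being units, no degeneracy occurs), i.e. a pseudo‑Cauchy sequence. Since $L=F((\Gamma,c_{\alpha,\beta}))$ is maximally complete (\cite{kap}, \cite{val}), $(s_\alpha)$ has a pseudo‑limit $s\in L$, and then $V(A-sI)\ge\delta_\alpha$ for all $\alpha$; hence either $A=sI$, so $s$ is an eigenvalue, or $V(A-sI)$ is a strictly larger offset and the recursion continues past the limit stage. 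A strictly increasing sequence in $\Gamma$ has ordinal length below $|\Gamma|^{+}$, so the recursion must stop — either by reaching the ``at least two distinct eigenvalues'' case, where the induction on $n$ closes the argument, or by arriving at a matrix that is literally a scalar. Either way $A$ has an eigenvalue in $L$, which with the reductions above proves $(H')_L$. Conceptually this is the assertion that a maximally complete field is algebraically maximal: an eigenvalue of $A$ computed in a real closure of $L$ would generate an immediate algebraic extension, hence already lies in $L$. The delicate and genuinely new point compared with \cite{spect} (which treats $\Gamma=\mathbb{Z}$, where ordinary completeness suffices) is precisely this transfinite bookkeeping and the appeal to pseudo‑limits at limit stages.
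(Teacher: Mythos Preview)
Your proof is correct and takes a genuinely different route from the paper's. The paper constructs the orthogonal conjugating matrix $\mathcal{U}=\sum_{\gamma}U_\gamma t^{\gamma}$ directly, by transfinite recursion over the well-ordered semigroup generated by the support of $\mathcal{A}$: at each stage $\delta$ the relation $\mathcal{U}^*\mathcal{U}=I$ pins down the symmetric part of $U_\delta$, while the requirement that $\mathcal{U}^*\mathcal{A}\mathcal{U}$ be $(r,n-r)$-blockdiagonal determines the off-diagonal blocks of its antisymmetric part; an outer induction on the size $n$ then finishes. You instead reduce $(H')_L$ via Proposition~\ref{b1} and anisotropy to the existence of a single eigenvalue, and produce that eigenvalue either by a Hensel splitting of $\chi_A$ (when the residual matrix has at least two eigenvalues, dropping the size) or by a transfinite pseudo-Cauchy approximation of a scalar, whose limit is supplied by maximal completeness. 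The paper's construction is more explicit and self-contained --- it needs neither Hensel's lemma, nor the separate verification that $L$ is Pythagorean, nor the language of pseudo-limits --- and is visibly the extension of the Keller--Ochsenius power-series recipe from $\Gamma=\mathbb{Z}$ to arbitrary $\Gamma$. Your argument is more structural: it isolates exactly where maximal completeness enters (existence of pseudo-limits at limit ordinals) and makes transparent the principle that an eigenvalue of a symmetric matrix, computed in any real closure, would generate an immediate algebraic extension of $L$ and hence already lies in $L$.
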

\begin{proof}
  We write $f = \sum_{\gamma}f(\gamma)t^{\gamma}$ for an element $f \in L$. Let $n \ge 2$, and let $\mathcal{A} \in \mathcal{M}_n(L)$ be a symmetric matrix. Then, $\mathcal{A}$ can be represented by
  \begin{align*}
    \mathcal{A} = \sum_{\gamma \in S} A_{\gamma}\, t^{\gamma}
  \end{align*}
  where $S \subseteq \Gamma$ is a well-ordered set, and $A_{\gamma} \in \mathcal{M}_n(F)$ is a symmetric matrix for each $\gamma \in S$. To prove the theorem, we may assume that the expansion of $\mathcal{A}$ is started from $0$;
  \begin{align*}
    \mathcal{A} = A_0 + \cdots, \quad S \subseteq \{\gamma \in \Gamma : \gamma \ge 0\},
  \end{align*}
  and $A_0$ is diagonal matrix, but not a multiple of the unit matrix $I$. Moreover, after conjugating by some permutation matrix, we may assume that there exists an $r$, $1 \le r < n$, such that $A_0$ is of the form
  \begin{gather*}
    \begin{pmatrix}
      a_{11} & \cdots & 0 \\
      \vdots & \ddots & \vdots \\
      0 & \cdots & a_{nn}
    \end{pmatrix} 
  \end{gather*}
  where
  \begin{align*}
    a_{ii} = a_{11} \ \text{for} \ 1 \le i \le r, \ \text{and} \ a_{ii} \neq a_{11} \ \text{for} \ r+1 \le i \le n.
  \end{align*}
  We shall prove that there exists an orthogonal matrix $\mathcal{U} \in \mathcal{M}_n(L)$ such that $\mathcal{U}^* \mathcal{A} \mathcal{U}$ is of the form
  \begin{align*}
    \begin{pmatrix}
      \mathcal{A}_1 & 0 \\
      0 & \mathcal{A}_2
    \end{pmatrix}
  \end{align*}
  where $\mathcal{A}_1 \in \mathcal{M}_{r}(L)$, $ \mathcal{A}_2 \in \mathcal{M}_{n-r}(L)$, then by an induction on size $n$, we complete the proof. In general, we call an $n$-square matrix $(r, n-r)$-blockdiagonal if it has the shape
  \begin{align*}
    \begin{pmatrix}
      B & 0 \\
      0 & C
    \end{pmatrix}
  \end{align*}
    where $B$ is an $r$-square matrix and $C$ is an $(n-r)$-square matrix. \par 
  Let $T := \{\gamma_1 + \cdots + \gamma_n : n \in \mathbb{N}, \gamma_1, \cdots \gamma_n \in S\}$ be the semigroup generated by $S$. Then, by \cite[Theorem 3.4]{neu}, T is a well-ordered set. By the transfinite construction, we will construct a sequence $U_0, \cdots, U_{\gamma}, \cdots \in \mathcal{M}_n$ indexed by $\gamma \in T$ such that
 \begin{align*}
   & (1) \  U_0^* \, U_0 = I, \\
   & (2) \  \sum_{\substack{\alpha + \beta = \gamma \\ \alpha, \beta \in T}} U_{\alpha}^* \, U_{\beta} c_{\alpha, \beta} = 0, \ \text{for each} \ \gamma \in T, \ \gamma > 0, \\
   & (3) \ V_{\gamma} := \sum_{\substack{\alpha + \beta + \eta = \gamma \\ \alpha, \beta, \eta \in T}} U_{\alpha}^* \, A_{\beta} U_{\eta} c_{\alpha, \beta, \eta} \ \text{is $(r,n-r)$-blockdiagonal for each} \ \gamma \in T
 \end{align*} 
 where $c_{\alpha, \beta, \eta} := c_{\alpha, \beta} c_{\alpha + \beta, \eta} = c_{\alpha, \beta + \eta} c_{\beta, \eta}$, hence $c_{\alpha, \beta, \eta} = c_{\eta, \beta, \alpha}$. Then, $\mathcal{U} := \sum_{\gamma} U_{\gamma} t^{\gamma}$ is the desired orthogonal matrix. \par
  For $\gamma = 0$, we put $U_0 = I$. Let $\delta \in T$, and suppose we have determined $U_0, \cdots, U_{\gamma}, \cdots$, $\gamma < \delta$, satisfying $(1)$-$(3)$. Consider the condition $(2)$ with $\gamma = \delta$. Since $U_0 = I$, we can rewrite this condition as
  \begin{align*}
    U_{\delta}^* + U_{\delta} + \sum_{\substack{\alpha + \beta = \delta \\ \alpha, \beta \neq \delta}} U_{\alpha}^* \, U_{\beta} c_{\alpha, \beta} = 0.
  \end{align*}
  Put 
  \begin{align*}
    S_{\delta} := \sum_{\substack{\alpha + \beta = \delta \\ \alpha, \beta \neq \delta}} U_{\alpha}^* \, U_{\beta} c_{\alpha, \beta},
  \end{align*}
  then it follows from $c_{\alpha, \beta} = c_{\beta, \alpha}$ that $S_{\delta}$ is a symmetric matrix. Hence $(2)$ holds if and only if $U_{\delta}$ is of the form
  \begin{align*}
    U_{\delta} = - \frac{1}{2}S_{\delta} + Q_{\delta}
  \end{align*}
  where $Q_{\delta}$ is any antisymmetric matrix. Therefore, the task is to choose an antisymmetric matrix $Q_{\delta}$ such that $U_{\delta} = - (1/2) S_{\delta} + Q_{\delta}$ satisfies $(3)$ with $\gamma = \delta$. \par
  Now, we can rewrite $V_{\delta}$ as
  \begin{align*}
    V_{\delta} &= U_{\delta}^* \, A_0 + A_0 U_{\delta} + \sum_{\substack{\alpha + \beta + \eta = \gamma \\ \alpha, \eta \neq 0}} U_{\alpha}^* \, A_{\beta} U_{\eta} c_{\alpha, \beta, \eta} \\
    &= -Q_{\delta}A_0 + A_0 Q_{\delta} + T_{\delta}
  \end{align*}
  where
  \begin{align*}
    T_{\delta} := - \frac{1}{2}(S_{\delta} A_0 + A_0 S_{\delta}) + \sum_{\substack{\alpha + \beta + \eta = \gamma \\ \alpha, \eta \neq 0}} U_{\alpha}^* \, A_{\beta} U_{\eta} c_{\alpha, \beta, \eta}.
  \end{align*}
  Since $S_{\delta}$ and all the $A_{\gamma}$'s are symmetric, combining $c_{\alpha, \beta, \eta} = c_{\eta, \beta, \alpha}$, it follows that $T_{\delta}$ is symmetric. Notice that $T_{\delta}$ is expressed in terms of matrices already determined. \par
  Write
  \begin{align*}
    V_{\delta} = 
    \begin{pmatrix}
      v_{11} & \cdots & 0 \\
      \vdots & \ddots & \vdots \\
      0 & \cdots & v_{nn}
    \end{pmatrix},
    Q_{\delta} =
    \begin{pmatrix}
      q_{11} & \cdots & 0 \\
      \vdots & \ddots & \vdots \\
      0 & \cdots & q_{nn}
    \end{pmatrix},
    T_{\delta} = 
    \begin{pmatrix}
      t_{11} & \cdots & 0 \\
      \vdots & \ddots & \vdots \\
      0 & \cdots & t_{nn}
    \end{pmatrix}.
    \end{align*}
    Then, we have 
    \begin{align*}
      v_{ij} = -q_{ij} a_{jj} +a_{ii}q_{ij} + t_{ij} = -q_{ij}(a_{jj} - a_{ii}) + t_{ij}
    \end{align*}
    for all $1 \le i,j \le n$. If either $1 \le i \le r < j \le n$ or $1 \le j \le r < i \le n$, then by choosing $a_{ii}$, we have $a_{ii} \neq a_{jj}$. Finally, we put 
    \begin{align*}
      q_{ij} := \left\{
      \begin{aligned}
        \frac{t_{ij}}{a_{jj}-a_{ii}} \ &; \ (\text{$1 \le i \le r < j \le n$ or $1 \le j \le r < i \le n$}) \\
        0 \ &; \ \text{otherwise}
      \end{aligned}
      \right. .
    \end{align*}
    Then, we can check that $Q_{\delta}$ is antisymmetric and $V_{\delta}$ is $(r,n-r)$-blockdiagonal. This completes the proof.
\end{proof}

By using the above theorem, we can characterize the condition for which $K$ satisfies the condition $(H)_K$. 
  
\begin{thm}
  Suppose that the residue class field $k$ of $K$ is formally real. Then, $K$ satisfies the condition $(H')_K$ if and only if $k$ satisfies the condition $(H')_k$.
\end{thm}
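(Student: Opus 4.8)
The plan is to prove the two implications separately, passing between $(H)$ and $(H')$ by Proposition~\ref{b1}, and handling the nontrivial direction with Theorem~\ref{b2}, Kaplansky's structure theorem for maximally complete fields, and the fact that a complete rank-one valued field is Henselian. Throughout write $\bar x$ for the image in $k$ of an element $x$ of the unit ball $B_K$, and recall that $k$ formally real forces $\mathrm{char}\,k=0$ and also makes $K$ itself formally real (a relation $\sum a_i^2=0$ in $K$ with not all $a_i=0$, scaled so that $\max_i|a_i|=1$, reduces to a vanishing sum of squares in $k$ with a nonzero term).

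\textbf{From $(H')_K$ to $(H')_k$.} Given a symmetric $\bar A\in\mathcal M_n(k)$, I would lift it to a symmetric $A\in\mathcal M_n(B_K)$ by choosing, for each $i\le j$, a common lift $a_{ij}=a_{ji}\in B_K$ of $\bar a_{ij}$. By $(H')_K$ there is an orthogonal $U\in\mathcal M_n(K)$ with $U^{*}AU$ diagonal. First I would check $U\in\mathcal M_n(B_K)$: the identity $U^{*}U=I$ gives $\sum_l u_{li}^{2}=1$ for each $i$, and if some entry $u_{l_0 i}$ of the $i$-th column satisfied $|u_{l_0 i}|=\max_l|u_{li}|>1$, then dividing this relation by $u_{l_0 i}^{2}$ and reducing modulo the maximal ideal of $B_K$ would give $\sum_l(\overline{u_{li}/u_{l_0 i}})^{2}=0$ in $k$ with the $l_0$-term equal to $1$, contradicting that $k$ is formally real. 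Hence $U$ and $U^{*}=U^{-1}$ lie in $\mathcal M_n(B_K)$, so reduction gives $\bar U\,(\bar U)^{*}=I$; thus $\bar U$ is orthogonal over $k$ and $(\bar U)^{*}\bar A\,\bar U=\overline{U^{*}AU}$ is diagonal. Since $\bar A$ was arbitrary, $(H')_k$ holds.

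\textbf{From $(H')_k$ to $(H')_K$.} Let $\hat K$ be the spherical completion of $K$ (c.f.\ \cite{van}). It is an immediate extension of $K$, so it has residue field $k$ and the same value group $\Gamma$ as $K$. Since $\mathrm{char}\,k=0$, Kaplansky's theorem \cite{kap} gives a (valued-field) isomorphism $\hat K\cong k((\Gamma,c_{\alpha,\beta}))$ for a suitable factor set $\{c_{\alpha,\beta}\}$. As $k$ satisfies $(H')_k$, Theorem~\ref{b2} shows that $\hat K$ satisfies $(H')_{\hat K}$; in particular every symmetric matrix over $\hat K$ is diagonalizable over $\hat K$. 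Now let $A\in\mathcal M_n(K)$ be symmetric. Viewed over $\hat K$ it is diagonalizable, so its characteristic polynomial $\chi_A\in K[X]$ splits over $\hat K$ and its minimal polynomial (an invariant of $A$ independent of the base field) has distinct roots. The eigenvalues of $A$ are roots of $\chi_A$, hence algebraic over $K$, and they generate a finite subextension $E\subseteq\hat K$ which, being contained in the immediate extension $\hat K$, is immediate over $K$. But $K$ is complete of rank one, hence Henselian, and $\mathrm{char}\,k=0$, so $K$ has no proper immediate algebraic extension; therefore $E=K$ and all eigenvalues of $A$ lie in $K$. Consequently the minimal polynomial of $A$ splits into distinct linear factors over $K$, so $A$ is diagonalizable over $K$. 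Thus $(H)_K$ holds, and Proposition~\ref{b1} (applicable since $K$ is formally real) yields $(H')_K$.

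The main obstacle is the descent from $\hat K$ to $K$: the crucial input is that a complete rank-one valued field with residue characteristic zero admits no proper immediate algebraic extension, which forces the eigenvalues produced over the Hahn field $\hat K$ to lie already in $K$. The upward transfer of diagonalizability is carried out entirely by Theorem~\ref{b2} together with Kaplansky's classification, and the reduction-modulo-$\mathfrak{}$ direction is routine once one knows that orthogonal matrices over $K$ automatically have entries in $B_K$, which is exactly where formal reality of $k$ is used.
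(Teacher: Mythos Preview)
Your proof is correct and follows essentially the same route as the paper's: the forward direction is the ``reduction to the residue class field'' that the paper alludes to (you additionally spell out why an orthogonal matrix over $K$ has entries in $B_K$), and for the converse both arguments pass to a maximally complete immediate extension, identify it with a Hahn field via Kaplansky, apply Theorem~\ref{b2}, and then descend using the fact that $K$ is algebraically closed in that extension---the paper cites this from \cite{artin}, while you derive it from Henselianity and $\mathrm{char}\,k=0$.
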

  \begin{proof}
    The sufficiency is easy to prove by the reduction to the residue class field. Conversely, suppose that $k$ satisfies the condition $(H')_k$. Let $L$ be an immediate extension of $K$ which is maximally complete (c.f. \cite[Theorem 4.49]{van}). Then, by the well-known result (c.f. \cite[Chapter 3, Corollary to Theorem 10]{artin}), $K$ is algebraically closed in $L$. Therefore, if $L$ satisfies the condition $(H)_{L}$, then $K$ satisfies the condition $(H)_K$, hence the condition $(H')_K$ by Proposition \ref{b1}. On the other hand, by \cite[Theorem 6]{kap}, $L$ is analytically isomorphic to the Hahn-field $k((G,c_{\alpha, \beta}))$ where $G$ is the valuation group of $K$ and $\{c_{\alpha, \beta}\} \subseteq k^*$ is a factor set. Hence, by Theorem \ref{b2}, $L$ satisfies the condition $(H')_L$, which completes the proof.
\end{proof}

By Proposition \ref{b1}, we have the next corollary. Surprisingly, the condition $(H)_K$ is independent of the valuation group of $K$.         

\begin{cor} \label{b3}
  Suppose that the residue class field $k$ of $K$ is formally real. Then, $K$ satisfies the condition $(H)_K$ if and only if $k$ satisfies the condition $(H)_k$.
\end{cor}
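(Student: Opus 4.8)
The plan is to deduce the corollary immediately from the theorem just proved together with Proposition \ref{b1}, applied once to $k$ and once to $K$. Proposition \ref{b1} asserts that for a \emph{formally real} field $F$ the two hypotheses $(H)_F$ and $(H')_F$ coincide; since $k$ is formally real by assumption, this gives $(H)_k \Leftrightarrow (H')_k$ at once. To invoke it with $F = K$ I first need the observation that $K$ is itself formally real whenever $k$ is. Indeed, if $\sum_{i=1}^n a_i^{2} = 0$ in $K$ with not all $a_i$ zero, pick $j$ with $|a_j| = \max_i |a_i|$ and divide the relation by $a_j^{2}$: all the quotients $a_i/a_j$ then lie in $B_K$ and $a_j/a_j = 1$ is a unit, so reducing modulo the maximal ideal of $B_K$ (a ring homomorphism on $B_K$) yields $\sum_{i} \overline{a_i/a_j}^{\,2} = 0$ in $k$ with the $j$-th term equal to $1 \neq 0$, contradicting the formal reality of $k$. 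Hence Proposition \ref{b1} is legitimately applicable to $K$ and gives $(H)_K \Leftrightarrow (H')_K$.

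Granting these two equivalences, the corollary is nothing more than the chain
\[
(H)_K \iff (H')_K \iff (H')_k \iff (H)_k ,
\]
whose middle link is precisely the content of the preceding theorem (valid under the standing assumption that $k$ is formally real). So the corollary carries no new mathematical content beyond that theorem: the striking fact that the diagonalization hypothesis depends only on the residue field and not on the value group of $K$ already lives in the $(H')$-formulation, and Proposition \ref{b1} merely transports it to the $(H)$-formulation.

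I therefore expect no genuine obstacle. The only step that is not a pure formality is the remark that formal reality descends from $k$ to $K$, which is exactly what is needed so that Proposition \ref{b1} may be used for the field $K$ and not only for $k$; and that, as sketched above, is a one-line rescale-and-reduce argument.
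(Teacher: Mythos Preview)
Your proposal is correct and follows exactly the approach the paper intends: the corollary is obtained from the preceding theorem by sandwiching it between two applications of Proposition~\ref{b1}. You are in fact more careful than the paper, since you explicitly verify that $K$ inherits formal reality from $k$ (needed to invoke Proposition~\ref{b1} with $F=K$), a point the paper leaves implicit.
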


Combining Theorem \ref{a6} with Corollary \ref{b3}, we can say that the spectral theorem of self-adjoin compactoid operators holds if the residue class field $k$ satisfies the condition $(H)_k$. The condition is independent of the valuation group of $K$. 

\begin{thm}
  Suppose that the residue class field $k$ of $K$ is formally real, and satisfies the condition $(H)_k$. Let $T \in \mathcal{L}(c_0)$ be a self-adjoint compactoid operator. Then, there exist an orthonormal sequence $x_1, x_2, \cdots \in c_0$ and $(\lambda_n) \in c_0$ such that $\langle x_n, x_n \rangle = 1$ for each $n \in \mathbb{N}$, and
  \begin{align*}
    T(x) = \sum_{n=1}^\infty \lambda_n \langle x, x_n \rangle x_n.
  \end{align*}
\end{thm}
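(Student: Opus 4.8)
The plan is to deduce this from Theorem~\ref{a6} and Corollary~\ref{b3}: the decomposition itself is already provided by Theorem~\ref{a6}, and the only genuinely new point is the normalization $\langle x_n,x_n\rangle=1$, which will rest on the fact that $(H)_K$ forces finite sums of squares in $K$ to be squares. First, since $k$ is formally real and satisfies $(H)_k$, Corollary~\ref{b3} tells us that $K$ satisfies $(H)_K$, so the hypotheses of Theorem~\ref{a6} are met. That theorem therefore produces an orthonormal sequence $y_1,y_2,\dots\in c_0$ and $(\mu_n)\in c_0$ with
\[
  T(x)=\sum_{n=1}^{\infty}\mu_n\,\frac{\langle x,y_n\rangle}{\langle y_n,y_n\rangle}\,y_n\qquad(x\in c_0),
\]
so it remains only to rescale each $y_n$ so that its diagonal coefficient becomes $1$.

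The crux is the following claim: \emph{for every $v\in c_0\setminus\{0\}$ the scalar $\langle v,v\rangle$ is a nonzero square in $K$}. It is nonzero because $|\langle v,v\rangle|=\|v\|^2>0$ by \cite[Theorem~6.1]{inner}. To see it is a square, recall first that $k$ has characteristic $0$ (a formally real field cannot have positive characteristic), so $2$ is a unit in $B_K$; next, since $K$ satisfies $(H)_K$ by Corollary~\ref{b3}, the argument in the proof of Proposition~\ref{b1} (diagonalizing a suitable $2\times2$ symmetric matrix, then inducting) shows that every finite sum of squares in $K$ is a square. Now choose $a\in K$ with $|a|=\|v\|$ — possible because $v\in c_0$, so the supremum defining $\|v\|$ is attained — and set $u:=a^{-2}\langle v,v\rangle=\sum_i(a^{-1}v_i)^2$, which has $|u|=1$. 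Picking $N$ so large that $|u-S|<1$ for $S:=\sum_{i=1}^{N}(a^{-1}v_i)^2$, we get $|S|=1$; thus $S$ is a nonzero finite sum of squares, hence a square, while $|u/S-1|<1$ makes $u/S$ a square because $K$ is complete and $2$ is a unit. Therefore $u$, and with it $\langle v,v\rangle=a^2u$, is a square.

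Given the claim, write $\langle y_n,y_n\rangle=a_n^2$ with $a_n\in K^*$ and set $x_n:=a_n^{-1}y_n$. Then $(x_n)$ is still orthonormal, $\langle x_n,x_n\rangle=1$ (so also $\|x_n\|=1$), and a short bilinearity computation gives $\frac{\langle x,y_n\rangle}{\langle y_n,y_n\rangle}\,y_n=\langle x,x_n\rangle x_n$ for each $n$. Substituting into the series above and setting $\lambda_n:=\mu_n\in c_0$ yields $T(x)=\sum_{n=1}^{\infty}\lambda_n\langle x,x_n\rangle x_n$, as desired. All the substance of the result is packed into Theorems~\ref{a6} and~\ref{b2} and Corollary~\ref{b3}; the only obstacle in this last step is extracting the square roots $a_n$, which is precisely where completeness of $K$ together with the sum-of-squares consequence of $(H)_K$ is used.
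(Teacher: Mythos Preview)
Your proof is correct and follows the same plan as the paper's: invoke Corollary~\ref{b3} to place yourself in the hypotheses of Theorem~\ref{a6}, and then show that every $\langle v,v\rangle$ with $v\neq 0$ is a square in $K$ so that the orthonormal sequence can be rescaled to satisfy $\langle x_n,x_n\rangle=1$. The only minor difference is in how the square-root claim is established: the paper applies the sum-of-squares consequence of Proposition~\ref{b1} in the residue field $k$ (using $(H)_k$ directly) and then lifts via Hensel's lemma, whereas you first pass to $(H)_K$ and apply Proposition~\ref{b1} in $K$ itself, handling the infinite series by truncation together with a Hensel-type square root of the unit residual $u/S$---both routes are valid and rest on the same ingredients.
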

\begin{proof}
  From Theorem \ref{a6} and Corollary \ref{b3}, it suffices to prove that for each $x \in c_0$, $x \neq 0$, we have $\sqrt{\langle x, x \rangle} \in K$. By the proof of Proposition \ref{b1}, we have $\sqrt{a_1^2 + \cdots a_n^2} \in k$ for each finite subset $\{a_1, \cdots, a_n \} \subseteq k$. Therefore, applying Hensel's lemma, we have the claim.
\end{proof}

\section{Appendix} \label{app}

In this appendix, we summarize the results of \cite[Section 6]{comp}. In \cite[Section 6]{comp}, the coefficient field $K$ is assumed to be algebraically closed. On the other hand, in this appendix, we give no condition on $K$. Hence, it can be perhaps discretely valued.

\begin{prop}[{\cite[Proposition 6.2]{comp}}] \label{app1}
  Suppose $K$ is densely valued or the residue class field $k$ of $K$ is an infinite field. Let $r \in |K|$, $r \neq 0$, and $f : B_K(0,r) \to E$,  $f(\lambda) = \sum_{n = 0}^{\infty} a_n \lambda^{n}$ be an analytic function. Then we have
  \begin{align*}
    \sup_{\lambda \in B_K(0,r)} \|f(\lambda)\| = \max_n \|a_n\|r^n.
  \end{align*}
\end{prop}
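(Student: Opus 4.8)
The plan is to prove the two inequalities separately, the nontrivial one being $\sup_{\lambda\in B_K(0,r)}\|f(\lambda)\|\ge\rho$, where I abbreviate $\rho:=\max_n\|a_n\|r^n$ (the maximum is attained because $\|a_n\|r^n\to0$). The inequality $\le$ is immediate from the ultrametric estimate $\|\sum_n a_n\lambda^n\|\le\max_n\|a_n\||\lambda|^n\le\max_n\|a_n\|r^n$ for $|\lambda|\le r$. We may assume $f\neq0$, so $\rho>0$.

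First I would normalize the radius. Since $r\in|K|$, pick $c\in K$ with $|c|=r$ and substitute $\lambda=c\nu$: with $b_n:=a_nc^n$ and $g(\nu):=f(c\nu)=\sum_n b_n\nu^n$ we have $\|b_n\|=\|a_n\|r^n$, so $\rho=\max_n\|b_n\|$, and it suffices to produce $\nu\in B_K(0,1)$ with $\|g(\nu)\|=\rho$. Let $S:=\{n:\|b_n\|=\rho\}$, a finite nonempty set, and $n_1:=\min S$. Note that the real number $\rho$ need \emph{not} lie in $|K^*|$, because $E$ is a general Banach space; this is precisely why one cannot just divide by a scalar of norm $\rho$ and reduce modulo the maximal ideal of $B_K$, and it is the source of the two alternative hypotheses.

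If $K$ is densely valued, I would exploit the value group directly. A short estimate shows there is $\varepsilon>0$ so that for every real $t$ with $1-\varepsilon<t<1$ the quantity $\|b_n\|t^n$ is strictly maximal at $n=n_1$: for $n>n_1$ one has $\|b_n\|t^n\le\rho t^n<\rho t^{n_1}$ since $t<1$, and for the finitely many $n<n_1$ one has $\|b_n\|<\rho$, so $\|b_n\|t^n<\rho t^{n_1}$ once $t$ is close enough to $1$ (as $t^{n_1-n}\to1$). Now choose $t\in|K^*|$ with $1-\varepsilon<t<1$ (possible by density of $|K^*|$ in $\mathbb{R}_{>0}$) and $\nu\in K$ with $|\nu|=t$. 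In $g(\nu)=b_{n_1}\nu^{n_1}+\sum_{n\neq n_1}b_n\nu^n$ the term $b_{n_1}\nu^{n_1}$ has norm $\rho t^{n_1}$, strictly larger than the norm of the remaining sum, so by the strong triangle equality $\|g(\nu)\|=\rho t^{n_1}$. Letting $t\to1^-$ through $|K^*|$ yields $\sup_\nu\|g(\nu)\|\ge\rho$.

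If instead $k$ is infinite, I would take $t=1$ and look for $\mu$ with $|\mu|=1$, reducing $E$ ``at level $\rho$'': let $E_\rho:=\{x\in E:\|x\|\le\rho\}/\{x\in E:\|x\|<\rho\}$, a $k$-vector space, with quotient map $x\mapsto\tilde x$. One checks $\widetilde{ux}=\bar u\,\tilde x$ for a unit $u\in B_K$, and $\tilde b_n\neq0$ exactly for $n\in S$; hence for $|\mu|=1$ one gets $\widetilde{g(\mu)}=\sum_{n\in S}\bar\mu^{\,n}\tilde b_n$ in $E_\rho$. Pick a $k$-linear functional $\psi$ on $E_\rho$ with $\psi(\tilde b_{n_1})\neq0$; then $\alpha\mapsto\sum_{n\in S}\alpha^n\psi(\tilde b_n)$ is a nonzero polynomial over $k$ (its term of smallest degree has nonzero coefficient $\psi(\tilde b_{n_1})$), so it has a non-root $\alpha\in k^*$ because $k$ is infinite. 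Lifting $\alpha$ to $\mu\in B_K$ with $|\mu|=1$ gives $\widetilde{g(\mu)}\neq0$, i.e. $\|g(\mu)\|=\rho$. In both cases the $\nu$ we found corresponds to $\lambda=c\nu$ with $|\lambda|=r$ and $\|f(\lambda)\|=\rho$, which gives $\ge$ and hence equality. The main obstacle is exactly this last passage, from ``the dominant coefficient has norm $\rho$'' to ``some genuine value $g(\nu)$ realizes $\rho$ with no ultrametric cancellation'': density of the value group supplies a radius just below $1$ on which all competing terms are killed, while infiniteness of the residue field supplies, on the unit sphere, a point avoiding the (hypersurface) cancellation locus in $k$; without one of these the statement genuinely fails, as the example $\lambda\mapsto\lambda^p-\lambda$ over $\mathbb{Q}_p$ on $B(0,1)$ shows.
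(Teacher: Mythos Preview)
Your argument is correct. The only cosmetic slip is in the concluding sentence: in the densely valued branch you do not actually exhibit a single $\nu$ with $\|g(\nu)\|=\rho$, but rather a family with $\|g(\nu)\|=\rho\, t^{n_1}\to\rho$ as $t\to 1^{-}$ (and $|\nu|=t<1$, not $=1$). This still gives $\sup_{\nu}\|g(\nu)\|\ge\rho$ and hence the stated equality, so nothing is lost.

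The paper's own proof is essentially a one-line deferral: it records that for $E=K$ the statement is the classical Gauss-norm identity (well known under either hypothesis on $K$) and then asserts that Schikhof's original argument for \cite[Proposition~6.2]{comp} carries over. Your proof, by contrast, is self-contained and confronts directly the point you correctly isolate: for a general Banach space $E$ the number $\rho=\max_n\|a_n\|r^n$ need not lie in $|K^*|$, so one cannot simply rescale by a scalar of absolute value $\rho$ and reduce modulo the maximal ideal of $B_K$. Your quotient $E_\rho=\{x:\|x\|\le\rho\}/\{x:\|x\|<\rho\}$ together with a separating $k$-linear functional is precisely the device that replaces this rescaling in the infinite-residue-field branch, while in the dense-valuation branch you sidestep the issue by working at radii just below $1$, where a single term strictly dominates. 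Compared with the paper's citation, your route is longer but makes the proof independent of \cite{comp} and transparent about where each hypothesis on $K$ is actually used; the counterexample $\lambda\mapsto\lambda^{p}-\lambda$ over $\mathbb{Q}_p$ you give confirms that at least one of them is necessary.
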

\begin{proof}
  In the case $E = K$, the conclusion of Proposition \ref{app1} is well-known. Hence, the same proof as that of \cite[Proposition 6.2]{comp} works.
\end{proof}

\begin{cor}[{\cite[Corollary 6.3]{comp}}]
  With the same hypotheses as those of Proposition \ref{app1}, the set of analytic functions $B_K(0,r) \to K$ is uniformly closed.
\end{cor}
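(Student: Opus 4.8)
The plan is to use Proposition~\ref{app1} to identify the uniform norm on analytic functions with the ``Gauss norm'' $\max_n\|a_n\|r^n$ on their coefficient sequences, and then exploit the completeness of $K$. First I would record that the analytic functions $B_K(0,r)\to K$ form a $K$-vector space: if $f(\lambda)=\sum_n a_n\lambda^n$ and $g(\lambda)=\sum_n b_n\lambda^n$ are analytic, then $f-g=\sum_n(a_n-b_n)\lambda^n$ with $|a_n-b_n|r^n\le\max(|a_n|r^n,|b_n|r^n)\to 0$, so $f-g$ is again analytic. Moreover, applying Proposition~\ref{app1} to $f$ itself shows that $f\equiv 0$ forces $\max_n|a_n|r^n=0$, hence all $a_n=0$; so the coefficients of an analytic function are uniquely determined by the function, the assignment $f\mapsto(a_n)_n$ is a well-defined $K$-linear map, and Proposition~\ref{app1} reads $\sup_{\lambda\in B_K(0,r)}|f(\lambda)|=\max_n|a_n|r^n$.

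Next, let $g:B_K(0,r)\to K$ lie in the uniform closure of the set of analytic functions; since uniform convergence is governed by the supremum metric it suffices to take a sequence of analytic functions $f^{(m)}(\lambda)=\sum_n a_n^{(m)}\lambda^n$ converging uniformly to $g$. Then $(f^{(m)})_m$ is uniformly Cauchy, so by the identity above $\max_n|a_n^{(m)}-a_n^{(m')}|r^n\to 0$ as $m,m'\to\infty$. In particular, for each fixed $n$ the sequence $(a_n^{(m)})_m$ is Cauchy in $K$, hence converges to some $a_n\in K$ by completeness; and fixing $m\ge M$ (with $M$ chosen for a given $\varepsilon$) and letting $m'\to\infty$ in the estimate $|a_n^{(m)}-a_n^{(m')}|r^n<\varepsilon$ gives $\sup_n|a_n^{(m)}-a_n|r^n\le\varepsilon$, i.e. $\sup_n|a_n^{(m)}-a_n|r^n\to 0$ as $m\to\infty$.

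Then I would check that $(a_n)_n$ is a legitimate coefficient sequence, namely $|a_n|r^n\to 0$: given $\varepsilon>0$, fix $m$ with $\sup_n|a_n^{(m)}-a_n|r^n<\varepsilon$; since $f^{(m)}$ is analytic there is $N$ with $|a_n^{(m)}|r^n<\varepsilon$ for $n\ge N$, whence $|a_n|r^n\le\max(|a_n-a_n^{(m)}|r^n,|a_n^{(m)}|r^n)<\varepsilon$ for $n\ge N$. Thus $h(\lambda):=\sum_n a_n\lambda^n$ defines an analytic function, and applying Proposition~\ref{app1} to $h-f^{(m)}$ gives $\sup_\lambda|h(\lambda)-f^{(m)}(\lambda)|=\max_n|a_n-a_n^{(m)}|r^n\to 0$, so $f^{(m)}\to h$ uniformly. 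By uniqueness of uniform limits $g=h$, so $g$ is analytic, which is precisely the assertion that the set of analytic functions is uniformly closed.

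The argument is essentially routine once Proposition~\ref{app1} is in hand; the only point needing a little care is the double-limit bookkeeping in the last two paragraphs, namely interchanging the limit in $m$ with the supremum over $n$ so as to conclude simultaneously that the $a_n$ form a null sequence relative to the weights $r^n$ and that $f^{(m)}$ converges to $\sum_n a_n\lambda^n$ uniformly rather than merely pointwise. Conceptually, Proposition~\ref{app1} exhibits the space of analytic functions on $B_K(0,r)$ as linearly isometric to the weighted null-sequence space $\{(a_n): |a_n|r^n\to 0\}$ with the sup norm, which is complete because $K$ is, and a complete subspace of the normed space of bounded functions $B_K(0,r)\to K$ is closed.
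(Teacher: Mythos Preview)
Your argument is correct and is exactly the standard deduction of this corollary from Proposition~\ref{app1}: the proposition makes the coefficient map a linear isometry onto the weighted null-sequence space, which is complete since $K$ is, and hence the image is uniformly closed. The paper does not spell out a proof at all---it simply records the statement as an immediate corollary of Proposition~\ref{app1}, citing \cite[Corollary~6.3]{comp}---so your write-up is precisely the argument being alluded to.
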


\begin{thm}[{\cite[Lemma 6.9]{comp}}] \label{app2}
  Let $T_1, T_2, \cdots \in \mathcal{L}(E)$, and let $T = \lim_{n \to \infty} T_n$ in the sense of the operator norm. Suppose that \\
  $(1)$ for each $n \in \mathbb{N}$ and each $r \in D_{T_n}$, $(I - \lambda T_n)^{-1}$ is analytic in $B_K(0,r)$,  \\
  $(2)$ for each $r \in D_T$, $M_r := \sup_{|\lambda| \le r} \|(I - \lambda T)^{-1}\| < \infty$, and \\
  $(3)$ $K$ is densely valued or the residue class field $k$ of $K$ is an infinite field. \\
 Then, $(I - \lambda T)^{-1}$ is analytic in $B_K(0,r)$ for each $r \in D_T$.  
\end{thm}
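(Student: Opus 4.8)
The plan is to fix an arbitrary $r \in D_T$ and realize $\lambda \mapsto (I - \lambda T)^{-1}$ on $B_K(0,r)$ as a \emph{uniform} limit of the functions $\lambda \mapsto (I - \lambda T_n)^{-1}$, each of which will be analytic there by hypothesis~$(1)$, and then to invoke the uniform closedness of the space of analytic functions on $B_K(0,r)$ valued in the $K$-Banach space $\mathcal L(E)$ --- which holds by the very argument of Proposition~\ref{app1} and the corollary after it, valid for functions into any $K$-Banach space, and is the one place where hypothesis~$(3)$ enters.

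The first --- and, I expect, the delicate --- step is to check that hypothesis~$(1)$ is actually usable, i.e.\ that $r \in D_{T_n}$ for all large $n$; a priori nothing forces the sets $D_{T_n}$ to contain a fixed $r \in D_T$. For $|\lambda| \le r$ I would factor
\[ I - \lambda T_n \;=\; (I - \lambda T)\bigl(I - \lambda (I-\lambda T)^{-1}(T_n - T)\bigr), \]
which is legitimate since $I - \lambda T$ is invertible on $B_K(0,r)$. Writing $S_{n,\lambda} := \lambda (I-\lambda T)^{-1}(T_n - T)$, hypothesis~$(2)$ gives $\|S_{n,\lambda}\| \le r\,M_r\,\|T_n - T\|$, so $\|S_{n,\lambda}\| < 1$ for all $|\lambda|\le r$ once $n \ge N = N(r)$, because $\|T_n - T\| \to 0$. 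For such $n$ the non-archimedean Neumann series makes $I - S_{n,\lambda}$ invertible with $\|(I - S_{n,\lambda})^{-1}\| \le 1$; hence $I - \lambda T_n$ is invertible, $B_K(0,r) \subseteq U_{T_n}$ (so $r \in D_{T_n}$), and $\|(I - \lambda T_n)^{-1}\| \le M_r$ uniformly for $|\lambda| \le r$. By hypothesis~$(1)$, $\lambda \mapsto (I-\lambda T_n)^{-1}$ is then analytic on $B_K(0,r)$ for every $n \ge N$.

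Next I would compare resolvents via the identity
\[ (I-\lambda T_n)^{-1} - (I-\lambda T)^{-1} \;=\; \lambda\,(I-\lambda T_n)^{-1}(T_n - T)(I-\lambda T)^{-1}, \]
which, together with $\|(I-\lambda T_n)^{-1}\| \le M_r$ and $\|(I-\lambda T)^{-1}\| \le M_r$, yields $\|(I-\lambda T_n)^{-1} - (I-\lambda T)^{-1}\| \le r\,M_r^2\,\|T_n - T\|$ for all $|\lambda|\le r$ and $n \ge N$. Since $\|T_n - T\| \to 0$, the analytic functions $(I-\lambda T_n)^{-1}$ ($n \ge N$) converge to $(I-\lambda T)^{-1}$ uniformly on $B_K(0,r)$, so by the uniform-closedness fact above $(I-\lambda T)^{-1}$ is analytic on $B_K(0,r)$. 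As $r \in D_T$ was arbitrary, the theorem follows.

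Beyond the first step, the only thing to watch is that the ``uniform limit of analytic is analytic'' statement genuinely needs hypothesis~$(3)$: it is Proposition~\ref{app1} (the identity $\sup_{|\lambda|\le r}\|f(\lambda)\| = \max_m \|a_m\| r^m$) that converts uniform smallness of $(I-\lambda T_n)^{-1} - (I-\lambda T_m)^{-1}$ into uniform smallness of the differences of their Taylor coefficients, allowing one to assemble the limiting power series and identify its sum with $(I-\lambda T)^{-1}$; without $(3)$ this link breaks down.
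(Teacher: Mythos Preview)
Your argument is correct and is precisely the standard route: use hypothesis~(2) together with the factorisation and Neumann series to force $r\in D_{T_n}$ for large $n$, then pass to the uniform limit via the resolvent identity and invoke the uniform closedness of analytic functions (which is where hypothesis~(3) is needed). The paper itself gives no details beyond citing \cite[Lemma~6.9]{comp}, and your proof is exactly the argument one expects behind that citation.
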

\begin{proof}
  We can apply the same proof as that of \cite[Lemma 6.9]{comp}.
\end{proof}

\begin{cor} \label{app3}
  With the same hypotheses as those of Theorem \ref{app2}, we have the following: \\
  $(1)$ If $K$ is densely valued, then we have
  \begin{align*}
    \lim_{n \to \infty} \|T^n\|^{1/n} = \sup_{\lambda \in \sigma(T)} |\lambda|.
  \end{align*}
  $(2)$ If $K$ is discretely valued and the residue class field $k$ of $K$ is an infinite field, then we have
  \begin{align*}
    \lim_{n \to \infty} \|T^n\|^{1/n} \le |\pi|^{-1} \sup_{\lambda \in \sigma(T)} |\lambda|
  \end{align*}
  where $\pi \in B_K$ is a generating element of a maximal ideal of $B_K$.
\end{cor}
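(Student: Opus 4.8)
The plan is to read everything off the Neumann series $(I-\lambda T)^{-1}=\sum_{n\ge 0}\lambda^{n}T^{n}$, whose coefficients carry the numbers $\|T^{n}\|$, and to compare its radius of analyticity with the geometry of the spectrum. First I would record that $\lim_{n\to\infty}\|T^{n}\|^{1/n}$ exists: submultiplicativity $\|T^{n+m}\|\le\|T^{n}\|\,\|T^{m}\|$ together with Fekete's lemma gives $\lim_{n}\|T^{n}\|^{1/n}=\inf_{n}\|T^{n}\|^{1/n}$. Writing $\rho:=\sup_{\lambda\in\sigma(T)}|\lambda|$, the whole proof reduces to sandwiching this limit between $\rho$ (from below, always) and an upper bound dictated by $D_{T}$, which is what differs in the two valuation cases.

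For the lower bound I would fix $\lambda_{0}\in\sigma(T)$, $\lambda_{0}\neq 0$, and set $\mu_{0}:=\lambda_{0}^{-1}$, so that $I-\mu_{0}T$ is not invertible, i.e. $\mu_{0}\notin U_{T}$. The telescoping identity $(I-\mu_{0}T)\sum_{n=0}^{N}\mu_{0}^{n}T^{n}=I-\mu_{0}^{N+1}T^{N+1}$ shows that if $\sum_{n}\mu_{0}^{n}T^{n}$ converged its sum would invert $I-\mu_{0}T$; hence the series diverges, and since $E$ is complete this forces $\|T^{n}\|\,|\mu_{0}|^{n}\not\to 0$. Taking $n$-th roots along a subsequence on which $\|T^{n}\|\,|\mu_{0}|^{n}$ stays bounded away from $0$ yields $\lim_{n}\|T^{n}\|^{1/n}\ge|\mu_{0}|^{-1}=|\lambda_{0}|$, and letting $\lambda_{0}$ vary gives $\lim_{n}\|T^{n}\|^{1/n}\ge\rho$. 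This argument needs neither compactoidness nor attainment of the supremum.

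For the upper bound I would invoke the conclusion of Theorem \ref{app2}: for each $r\in D_{T}$ the map $\lambda\mapsto(I-\lambda T)^{-1}$ is analytic on $B_{K}(0,r)$. Matching its power series with the Neumann series on a small ball about $0$ and using uniqueness of coefficients identifies the $n$-th coefficient as $T^{n}$, so the defining decay condition of an analytic function reads $\|T^{n}\|r^{n}\to 0$, whence $\lim_{n}\|T^{n}\|^{1/n}\le r^{-1}$ for every $r\in D_{T}$. It then remains to compute $\sup_{r\in D_{T}}r$ in terms of $\rho$. Using $\mu\in U_{T}\Leftrightarrow\mu^{-1}\notin\sigma(T)$ for $\mu\neq 0$, one checks that $r\in D_{T}$ exactly when $B_{K}(0,r)$ misses every $\lambda_{0}^{-1}$ with $\lambda_{0}\in\sigma(T)$, i.e. when $r<\rho^{-1}$ (the case $\rho=0$ being trivial). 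In the densely valued case $|K|$ accumulates at $\rho^{-1}$ from below, so $\sup_{r\in D_{T}}r=\rho^{-1}$ and the upper bound becomes $\lim_{n}\|T^{n}\|^{1/n}\le\rho$, matching the lower bound and giving part $(1)$. In the discretely valued case $\rho=|\lambda_{0}|\in|K|$ is attained, $\rho^{-1}\notin D_{T}$ (the ball of radius $\rho^{-1}$ already contains $\lambda_{0}^{-1}$), and the largest admissible radius is one step down, $\sup_{r\in D_{T}}r=|\pi|\,\rho^{-1}$; the upper bound then reads $\lim_{n}\|T^{n}\|^{1/n}\le|\pi|^{-1}\rho$, which is part $(2)$.

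I expect the delicate point to be the discretely valued bookkeeping of the last step: the factor $|\pi|^{-1}$ is precisely the gap in the value group $|K|$ between $\rho^{-1}$ and the next smaller attained value, so I must argue carefully that $\rho$ is attained and lies in $|K|$ (true since each nonzero spectral value has modulus in the discrete set $|K|$, and a bounded subset of $|K|$ has a maximum) and that $\rho^{-1}$ itself fails to lie in $D_{T}$. Everything else is a routine transcription of the Neumann series and of the analyticity definition supplied by Theorem \ref{app2}.
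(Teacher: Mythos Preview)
Your approach is essentially the paper's approach, just spelled out in considerably more detail. The paper's proof is three lines: identify the power series of $(I-\lambda T)^{-1}$ on a small ball as the Neumann series, invoke Proposition~\ref{app1} (for uniqueness of coefficients) together with Theorem~\ref{app2} to conclude that $\sum_n \lambda^n T^n$ represents $(I-\lambda T)^{-1}$ on every $B_K(0,r)$ with $r\in D_T$, and then say ``Hence, we derive (1), (2).'' You supply exactly the steps hidden in that last sentence: the decay $\|T^n\|r^n\to 0$, the translation of $\sup_{r\in D_T} r$ into $\rho^{-1}$ (resp.\ $|\pi|\rho^{-1}$), and the lower bound $\lim_n\|T^n\|^{1/n}\ge\rho$ needed for the equality in (1), which the paper leaves entirely implicit.

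One small imprecision to clean up: your biconditional ``$r\in D_T$ exactly when $r<\rho^{-1}$'' is not quite right in general, since if $\rho$ is not attained one could have $\rho^{-1}\in D_T$ as well. What you actually use (and what is true) is the implication $r<\rho^{-1}\Rightarrow r\in D_T$, which already suffices for the upper bound in both cases. In the discretely valued case your attainment argument is correct and shows $\rho^{-1}\notin D_T$, so the largest admissible $r\in|K|$ really is $|\pi|\rho^{-1}$.
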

\begin{proof}
  For a sufficiently small $r > 0$, $(I - \lambda T)^{-1}$ is of the form $\sum_n (\lambda T)^n$ in $B_K(0,r)$. Therefore, by Proposition \ref{app1} and Theorem \ref{app2}, we have $(I - \lambda T)^{-1} = \sum_n (\lambda T)^n$ in $B_K(0,r)$ for each $r \in D_T$. Hence, we derive $(1)$, $(2)$.
\end{proof}

For $x_1, \cdots, x_n \in E$, we define the volume function of $x_1, \cdots, x_n \in E$ by
\begin{align*}
  \text{Vol}(x_1, \cdots, x_n) := \prod_{i=i}^n \text{dist}(x_i, [x_j : j < i])
\end{align*}
These properties can be found in \cite[Chapter 1]{note}.

From now on, when $K$ is discretely valued, we assume that a Banach space $(E,\|\cdot\|)$ satisfies $\|E\| \subseteq |K|$.

\begin{dfn}[{\cite[Definition 6.10]{comp}}]
Let $E$ be infinite-dimensional, let $T \in \mathcal{L}(E)$. For $n \in \mathbb{N}$, we set
\begin{align*}
  \Delta_n(T) &:= \sup\{ \frac{\mathrm{Vol}(T(x_1), \cdots, T(x_n))}{\mathrm{Vol}(x_1, \cdots, x_n)} : x_1, \cdots, x_n \, \mathrm{linearly} \, \mathrm{independent}\} \\
  \Delta_{-}(T) &:= \liminf_{n \to \infty}(\Delta_n(T))^{1/n} \\
  \Delta_{+}(T) &:= \limsup_{n \to \infty}(\Delta_n(T))^{1/n}.
\end{align*}
\end{dfn}

By \cite[Corollary 1.5]{note}, if $[x_1, \cdots, x_n] = [y_1, \cdots, y_n]$, then we have 
\begin{align*}
  \text{Vol}(x_1, \cdots, x_n) = \text{Vol}(y_1, \cdots, y_n).
\end{align*}
Thus, we obtain
\begin{align*}
  \Delta_n(T) = \sup \{ \text{Vol}(T(x_1), \cdots, T(x_n)) : \|x_i\| \le 1 \,\text{for each} \, i\}
\end{align*}

\begin{prop}[{\cite[Proposition 6.11]{comp}}] \label{app5}
  Let $T \in \mathcal{L}(E)$ be a compactoid operator. Then, we have $\Delta_{+}(T) = 0$.
\end{prop}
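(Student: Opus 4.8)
The plan is to bound $\Delta_n(T)$ directly from the covering that defines a compactoid and then pass to $n$-th roots. By the reformulation recorded just before the statement, $\Delta_n(T)=\sup\{\mathrm{Vol}(T(x_1),\dots,T(x_n)):\|x_i\|\le 1\}$. Fix $r>0$. Since $T(B_E(0,1))$ is a compactoid there are $a_1,\dots,a_m\in E$ with $T(B_E(0,1))\subseteq B_E(0,r)+B_K a_1+\dots+B_K a_m$; put $Z:=[a_1,\dots,a_m]$, so $\dim Z\le m$, and $M:=\max(\|T\|,r)$. For $\|x_i\|\le 1$ write $T(x_i)=y_i+z_i$ with $\|y_i\|\le r$ and $z_i\in Z$; then $\|z_i\|=\|T(x_i)-y_i\|\le M$ and also $\|y_i\|\le M$.

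The heart of the argument is a non-archimedean multi-additivity estimate for the volume function: for all $v_i=y_i+z_i$ $(1\le i\le n)$ and all auxiliary vectors $w_1,\dots,w_k$,
\[
\mathrm{Vol}(w_1,\dots,w_k,v_1,\dots,v_n)\ \le\ \max_{I\subseteq\{1,\dots,n\}}\mathrm{Vol}\bigl(w_1,\dots,w_k,(y_i)_{i\in I},(z_i)_{i\notin I}\bigr).
\]
I would prove this by induction on $n$: peel off $v_n=y_n+z_n$, use $\mathrm{dist}(a+b,W)\le\max(\mathrm{dist}(a,W),\mathrm{dist}(b,W))$ for a subspace $W$ (immediate from the ultrametric inequality, since $W+W=W$), and then apply the inductive hypothesis after moving the vector just selected to the front — here one invokes the permutation-invariance of $\mathrm{Vol}$ from \cite[Corollary 1.5]{note}. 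This estimate may well already be among the properties gathered in \cite[Chapter 1]{note}, in which case one simply cites it.

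Now apply the estimate with no auxiliary vectors. A term with $n-|I|>m\ (\ge\dim Z)$ vanishes, because $\mathrm{Vol}$ of a linearly dependent family is zero (the first redundant $\mathrm{dist}$-factor is $0$). Every surviving term has $|I|\ge n-m$, and by the standard bound $\mathrm{Vol}(\cdot)\le\prod\|\cdot\|$ it is at most $\prod_{i\in I}\|y_i\|\prod_{i\notin I}\|z_i\|\le r^{|I|}M^{\,n-|I|}\le r^{\,n-m}M^{m}$ (using $r\le M$). Hence $\Delta_n(T)\le M^{m}r^{\,n-m}$ for every $n>m$, so $\Delta_n(T)^{1/n}\le r\,(M/r)^{m/n}\to r$ as $n\to\infty$, whence $\Delta_+(T)\le r$. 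As $r>0$ is arbitrary, $\Delta_+(T)=0$ (and then $\Delta_-(T)=0$ too, being squeezed between $0$ and $\Delta_+(T)$).

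The main obstacle is the multi-additivity estimate. A naive ``split $T(x_i)$, discard the part in $Z$, work in a complement'' argument founders because $Z$ need not be complemented and, even when one expands the $z_i$ in an orthogonal basis of $Z$, the expansion coefficients are not controlled by $r$ and $M$, so the small parts $y_i$ can be amplified arbitrarily. Phrasing everything through $\mathrm{Vol}$, whose permutation-invariance and sub-product bound are already available, is precisely what makes the coefficients irrelevant; the only delicate point is then the bookkeeping of which slots are ``fixed'' and which are ``split'' in the induction.
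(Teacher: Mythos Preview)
Your argument is correct. The multi-additivity estimate does go through exactly as you outline: with $W=[w_1,\dots,w_k,v_1,\dots,v_{n-1}]$ one has
\[
\mathrm{Vol}(\dots,v_{n-1},v_n)=\mathrm{Vol}(\dots,v_{n-1})\cdot\mathrm{dist}(y_n+z_n,W)\le\max\bigl(\mathrm{Vol}(\dots,v_{n-1},y_n),\mathrm{Vol}(\dots,v_{n-1},z_n)\bigr),
\]
and the span-invariance quoted from \cite[Corollary~1.5]{note} (which covers both permutation-invariance and the vanishing on dependent families, since a dependent tuple can be permuted so that its last distance factor is zero) lets you push the selected vector into the auxiliary block and recurse. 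The remaining bounds $\mathrm{Vol}\le\prod\|\cdot\|$, $\|z_i\|\le M$, and $r^{|I|}M^{n-|I|}\le r^{n-m}M^m$ for $|I|\ge n-m$ are all unproblematic.

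As for comparison: the paper does not supply its own proof here --- it simply directs the reader to Schikhof's original argument in \cite[Proposition~6.11]{comp}. That argument proceeds along the same lines (split each $T(x_i)$ via the compactoid cover into a small piece and a piece in a fixed finite-dimensional subspace, then exploit the multilinear-type behaviour of the volume so that at most $m$ factors can contribute the ``large'' bound), so your write-up is essentially a reconstruction of the intended proof rather than a genuinely different route. Your explicit isolation of the multi-additivity inequality as a stand-alone lemma is a nice touch; it makes the combinatorics cleaner than a more ad hoc expansion would.
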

\begin{proof}
  See the proof of \cite[Proposition 6.11]{comp}.
\end{proof}

\begin{lem}[{\cite[Lemma 6.13]{comp}}] \label{app4}
  Let $x_1, \cdots, x_n \in E$, $\|x_i\| \le 1$ for each $i$ and $0 < \varepsilon < \mathrm{Vol}(x_1, \cdots, x_n)$. If $y_1, \cdots, y_n \in E$, $\|y_i - x_i\| < \varepsilon$ for each $i$, then we have $\mathrm{Vol}(x_1, \cdots, x_n) = \mathrm{Vol}(y_1, \cdots, y_n)$.
\end{lem}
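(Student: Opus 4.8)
The plan is to induct on $n$, peeling off the last vector at each stage, so that the only thing to verify in the inductive step is that one distance is unchanged under the perturbation; the single non-formal ingredient — and the main obstacle — is a bound on the coordinates of a vector in terms of the volume of the frame. First I dispose of the trivial cases and preliminaries. If $x_1,\dots,x_n$ are linearly dependent then $\mathrm{Vol}(x_1,\dots,x_n)=0$ and the hypothesis is vacuous, so I may assume them independent with all distances $\mathrm{dist}(x_i,[x_j:j<i])$ positive. Each such distance is $\le\|x_i\|\le1$, so the product gives $\varepsilon<\mathrm{Vol}(x_1,\dots,x_n)\le\|x_i\|$ for every $i$; hence $\|y_i\|=\|x_i\|\le1$ by the ultrametric inequality, and $\varepsilon<\mathrm{Vol}(x_1,\dots,x_n)\le\mathrm{Vol}(x_1,\dots,x_m)$ for every $m\le n$, so the hypotheses of the lemma pass to every initial segment $x_1,\dots,x_m$, $y_1,\dots,y_m$.

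Next I would isolate the key sub-lemma: if $w_1,\dots,w_m\in E$ are linearly independent with $\|w_i\|\le1$ and $v=\sum_{i=1}^m\mu_iw_i$ satisfies $\|v\|\le1$, then $|\mu_i|\le\mathrm{Vol}(w_1,\dots,w_m)^{-1}$ for every $i$; this is in the spirit of the properties of the volume function collected in \cite[Chapter~1]{note}. It is proved by induction on $m$. Writing $d_m'=\mathrm{dist}(w_m,[w_1,\dots,w_{m-1}])$, the identity $\mathrm{Vol}(w_1,\dots,w_m)=\mathrm{Vol}(w_1,\dots,w_{m-1})\,d_m'$ together with $\mathrm{Vol}(w_1,\dots,w_{m-1})\le1$ yields $d_m'\ge\mathrm{Vol}(w_1,\dots,w_m)$, and from $\|v\|\ge\mathrm{dist}(v,[w_1,\dots,w_{m-1}])=|\mu_m|\,d_m'$ we get $|\mu_m|\le1/d_m'\le\mathrm{Vol}(w_1,\dots,w_m)^{-1}$; since $v-\mu_mw_m\in[w_1,\dots,w_{m-1}]$ has norm $\le\max(1,|\mu_m|)$, the inductive hypothesis applied to $v-\mu_mw_m$, or to $\mu_m^{-1}(v-\mu_mw_m)$ if $|\mu_m|>1$, together with $|\mu_m|\le1/d_m'$ and $\mathrm{Vol}(w_1,\dots,w_{m-1})\ge\mathrm{Vol}(w_1,\dots,w_m)$, bounds the remaining $|\mu_i|$.

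Now the main induction on $n$. For $n=1$, $\mathrm{Vol}$ is the norm and the claim is the ultrametric inequality. For the step, the inductive hypothesis on the initial segments gives $W:=\mathrm{Vol}(y_1,\dots,y_{n-1})=\mathrm{Vol}(x_1,\dots,x_{n-1})>\varepsilon$; set $d_n=\mathrm{dist}(x_n,[x_1,\dots,x_{n-1}])$, so $\mathrm{Vol}(x_1,\dots,x_n)=Wd_n$ and $\varepsilon/W<d_n$. I then match two distances. For $\mathrm{dist}(x_n,[y_1,\dots,y_{n-1}])=d_n$: any near-minimiser $v=\sum_{i<n}\mu_iy_i$ has $\|v\|\le\|x_n\|\le1$, hence $\max_i|\mu_i|\le W^{-1}$ by the sub-lemma, so $v'=\sum_{i<n}\mu_ix_i\in[x_1,\dots,x_{n-1}]$ satisfies $\|v-v'\|\le\max_i|\mu_i|\,\varepsilon\le\varepsilon/W<d_n\le\|x_n-v'\|$ and therefore $\|x_n-v\|=\|x_n-v'\|\ge d_n$; the reverse inequality follows symmetrically from a near-minimiser over $[x_1,\dots,x_{n-1}]$ using the sub-lemma for the $x_i$. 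For $\mathrm{dist}(y_n,[y_1,\dots,y_{n-1}])=d_n$: for every $v\in[y_1,\dots,y_{n-1}]$ we have $\|x_n-v\|\ge d_n>\varepsilon>\|y_n-x_n\|$, hence $\|y_n-v\|=\|x_n-v\|$, and taking the infimum gives $d_n$. Combining, $\mathrm{Vol}(y_1,\dots,y_n)=W\cdot d_n=\mathrm{Vol}(x_1,\dots,x_n)$.

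I expect the coordinate bound in the sub-lemma — controlling $\max_i|\mu_i|$ solely in terms of $\mathrm{Vol}(w_1,\dots,w_m)$ — to be the real content; once that is available, the inductive step is a routine alternation of the ultrametric inequality with the flag-multiplicativity $\mathrm{Vol}(z_1,\dots,z_m)=\mathrm{Vol}(z_1,\dots,z_{m-1})\,\mathrm{dist}(z_m,[z_1,\dots,z_{m-1}])$. The only technical nuisance is that the infimum defining a distance to a finite-dimensional subspace need not be attained in general; this is dealt with by working with $\delta$-near-minimisers and letting $\delta\to0$, and is automatic when $K$ is discretely valued since then $\|E\|\subseteq|K|$.
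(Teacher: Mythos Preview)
Your argument is correct. The paper does not supply its own proof of this lemma; it simply refers the reader to \cite[Lemma~6.13]{comp}, so there is no in-paper proof to compare against. Your induction on $n$, together with the coordinate bound $|\mu_i|\le \mathrm{Vol}(w_1,\dots,w_m)^{-1}$, is exactly the kind of argument one expects from the volume-function machinery of \cite[Chapter~1]{note} (and is in the spirit of Schikhof's original proof). One small point worth making explicit when you write it up: in the step showing $\mathrm{dist}(x_n,[y_1,\dots,y_{n-1}])\ge d_n$, you should also cover the trivial case $\|v\|>1$ (where $\|x_n-v\|=\|v\|>1\ge d_n$ directly), so that the sub-lemma is only invoked when $\|v\|\le1$; otherwise the argument is complete as stated.
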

\begin{proof}
  See the proof of \cite[Lemma 6.13]{comp}.
\end{proof}

The next proposition is proved in \cite{comp}, but the proof makes a little mistake. We shall give a modified proof.

\begin{prop}[{\cite[Proposition 6.12]{comp}}] \label{app6}
  Let $T \in \mathcal{L}(E)$ be such that
  \begin{align*}
    M_s = \sup_{|\lambda| \le s} \|(I - \lambda T)^{-1}\| = \infty
  \end{align*}
  for some $s \in D_T$, then we have $\Delta_{-}(T) > 0$.
\end{prop}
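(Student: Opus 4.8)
\emph{Overview.} The plan is to turn the blow-up $M_s=\infty$ into a plentiful supply of approximate eigenvectors of $T$ whose eigenvalues have absolute value $\ge 1/s$, and then to show that such approximate eigenvectors either force $\Delta_n(T)$ to be large for every $n$ — giving $\Delta_-(T)>0$ directly — or produce an honest eigenvalue that is incompatible with $s\in D_T$. Concretely, since $M_s=\infty$ pick $\lambda_m\in B_K(0,s)$ with $\|(I-\lambda_m T)^{-1}\|\ge m^2$; taking $y_m$ with $\|y_m\|\le 1$ and $z_m:=(I-\lambda_m T)^{-1}y_m$ of norm $\ge m^2/2$, the unit vectors $w_m:=z_m/\|z_m\|$ satisfy $\|(I-\lambda_m T)w_m\|\le 2/m^2\to 0$. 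As $\lambda_m=0$ would give $\|w_m\|\le 2/m^2$, we may assume $\lambda_m\ne 0$; with $\mu_m:=\lambda_m^{-1}$ this reads $\|Tw_m-\mu_m w_m\|\le 2|\mu_m|/m^2$, and $|\mu_m|\ge 1/s$. Also $(|\mu_m|)$ is bounded: if $\lambda_m\to 0$ along a subsequence then $(I-\lambda_m T)^{-1}=\sum_k\lambda_m^k T^k\to I$ there, against $\|(I-\lambda_m T)^{-1}\|\to\infty$. So, after passing to a subsequence, we have unit vectors $w_m$ with $\delta_m:=\|Tw_m-\mu_m w_m\|\to 0$ and $1/s\le|\mu_m|\le M$ for a fixed $M$.

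\emph{The orthogonal case.} Suppose $(w_m)$ has an infinite $t$-orthogonal subsequence for some $t\in(0,1]$; relabel it as $(w_m)$ and discard finitely many terms so that $\delta_m<t/s$ for all $m$. Fix $n$, and for $i\le n$ and scalars $c_j$ $(j<i)$ estimate, writing $z_j:=Tw_j-\mu_j w_j$, that $t$-orthogonality of $w_1,\dots,w_n$ gives $\|\mu_i w_i-\sum_{j<i}c_j\mu_j w_j\|\ge (t/s)\max(1,\max_j|c_j|)$, while $\|z_i-\sum_{j<i}c_j z_j\|\le(\max_k\delta_k)\max(1,\max_j|c_j|)<(t/s)\max(1,\max_j|c_j|)$. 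Since $Tw_i-\sum_{j<i}c_j Tw_j=(\mu_i w_i-\sum_{j<i}c_j\mu_j w_j)+(z_i-\sum_{j<i}c_j z_j)$, the ultrametric inequality yields $\|Tw_i-\sum_{j<i}c_j Tw_j\|\ge t/s$, hence $\mathrm{dist}(Tw_i,[Tw_j:j<i])\ge t/s$. Therefore $\Delta_n(T)\ge \mathrm{Vol}(Tw_1,\dots,Tw_n)\ge(t/s)^n$ for every $n$, and so $\Delta_-(T)=\liminf_n\Delta_n(T)^{1/n}\ge t/s>0$.

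\emph{The compactoid case.} Otherwise $(w_m)$ has no infinite $t$-orthogonal subsequence for any $t$, i.e. $\{w_m:m\}$ is compactoid, and the aim becomes to contradict $s\in D_T$. In the situation where the $w_m$ can be moved into a fixed finite-dimensional subspace $V$ — after a subsequence and a perturbation tending to $0$, keeping $\|v_m\|=1$, $\|Tv_m-\mu_m v_m\|\to 0$, $1/s\le|\mu_m|\le M$ — the argument closes. Choose a continuous projection $P$ onto $V$, put $S:=PT|_V\in\mathcal{L}(V)$, so $\|Sv_m-\mu_m v_m\|\to 0$, and let $R\subseteq K$ be the finite set of roots in $K$ of the characteristic polynomial of $S$. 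If $\inf_m\mathrm{dist}(\mu_m,R)>0$, then writing $(S-\mu I)^{-1}$ as adjugate over determinant and using that each root of that polynomial outside $K$ is bounded away from $K$ (as $K$ is complete) gives $\|(S-\mu_m I)^{-1}\|\le C$ uniformly, whence $1=\|v_m\|\le C\|(S-\mu_m I)v_m\|\to 0$, absurd. Otherwise $\mathrm{dist}(\mu_m,R)\to 0$ along a subsequence, and $R$ being finite we get $\mu_m\to\nu\in R$ along a further subsequence; then $\|(T-\nu I)v_m\|\to 0$ with $\|v_m\|=1$ and $v_m\in V$, so $(T-\nu I)|_V$ is not injective, hence $Tv=\nu v$ for some $v\ne 0$ with $|\nu|\ge 1/s$. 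But then $\nu^{-1}\in B_K(0,s)$ while $I-\nu^{-1}T$ is not injective, contradicting $B_K(0,s)\subseteq U_T$.

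\emph{The main obstacle.} The genuinely delicate point, and precisely where \cite[Proposition 6.12]{comp} is imprecise, is the compactoid case: a compactoid sequence of unit vectors need not collapse onto a single finite-dimensional subspace — its mass may be spread over infinitely many near-orthogonal directions of shrinking size — so the finite-dimensional reduction above is not literally available. The repair I would pursue is to work inside the Banach space generated by $\{w_m\}$ and iterate the defining decomposition of a compactoid set while forcing the successive errors down to $0$ (a transfinite/diagonal bookkeeping), arriving again either at an infinite $t$-orthogonal family of approximate eigenvectors, sending us back to the orthogonal case, or at an honest eigenvalue $\nu$ with $|\nu|\ge 1/s$, contradicting $s\in D_T$. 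Once this dichotomy is formulated cleanly, the other two branches are routine.
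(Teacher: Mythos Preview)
Your argument has a genuine gap: the ``compactoid case'' is left unresolved. Your proposed repair---iterating compactoid decompositions via a diagonal or transfinite scheme---is only a sketch, and your finite-dimensional sub-argument, while internally correct, does not apply, since (as you yourself note) a compactoid sequence of unit vectors need not lie near any single finite-dimensional subspace. Your diagnosis that this branch is ``precisely where \cite[Proposition~6.12]{comp} is imprecise'' is also off: the correction concerns the bookkeeping of constants in an inductive volume estimate, not any case split.

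The paper's route avoids your dichotomy entirely by exploiting a fact you use only partially. You observe that the $|\mu_m|$ are bounded above, but the sharper statement is that the $\lambda_m$ have \emph{no convergent subsequence whatsoever}: if $\lambda_m\to\lambda_0\in B_K(0,s)$ along a subsequence, continuity of $\lambda\mapsto(I-\lambda T)^{-1}$ on $B_K(0,s)\subseteq U_T$ would bound $\|(I-\lambda_m T)^{-1}\|$, a contradiction. Passing to a subsequence, one may therefore assume a uniform separation $\inf_{n\ne m}|\mu_n-\mu_m|\ge\rho>0$. This separation is the engine: one proves by induction on $n$ that $\mathrm{Vol}(x_{k+1},\dots,x_{k+n})\ge r(n)>0$ for all large $k$, the inductive step being an identity that rewrites $\sum_{i<n}\xi_i(\mu_{m+n}-\mu_{m+i})x_{m+i}$ in terms of $y:=x_{m+n}-\sum_{i<n}\xi_i x_{m+i}$, of $Ty$, and of the small errors $\mu_j x_j-Tx_j$. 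Bounding the left side from below via $\rho$ and the inductively obtained $t$-orthogonality of $x_{m+1},\dots,x_{m+n-1}$, and the right side from above, forces $\|y\|$ to be bounded below. With the volume bound in hand, Lemma~\ref{app4} converts $x_k\approx\lambda_k Tx_k$ into $\Delta_n(T)\ge|\lambda_{k+1}\cdots\lambda_{k+n}|^{-1}\ge s^{-n}$, so $\Delta_-(T)\ge s^{-1}>0$. In other words, the separation of approximate eigenvalues forces the approximate eigenvectors to have large volume directly; your compactoid alternative never occurs, and the dichotomy is unnecessary.
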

\begin{proof}
  By assumption, there exists a sequence $\lambda_1, \lambda_2, \cdots \in B_K(0,s)$ satisfying that $\|(I - \lambda_n T)^{-1}\|$ tends to $\infty$. Thus, there exists a sequence $y_1, y_2, \cdots \in E$ tending to $0$ such that for
  \begin{align*}
    x_n := (I - \lambda_n T)^{-1} y_n,
  \end{align*}
we have $\inf_n \|x_n\| > 0$ and $\sup_n \|x_n\| < \infty$. It follows from the same reason of part \textrm{I} of the proof of \cite[Proposition 6.12]{comp} that $\lambda_1, \lambda_2, \cdots$ does not have a convergent subsequence (part \textrm{I} of the proof of \cite[Proposition 6.12,]{comp} is correct). Thus, by taking a suitable subsequence, we may assume $\inf_{n \neq m} |\lambda_n - \lambda_m| \neq 0$ and $\inf_n |\lambda_n| > 0$. By replacing a norm $\|\cdot\|$ with a suitable norm equivalent to $\|\cdot\|$, we may assume that $\|x_n\| = 1$ for each $n \in \mathbb{N}$. Also, it is easy to see that we may assume $\|T\| < 1$, hence $s < 1$. \par

Put $\mu_n := \lambda_n^{-1}$ for each $n$, then we have
\begin{itemize}
  \item $\|T\| < 1$,
  \item $|\lambda_n| \le s < 1, |\mu_n| \le C$ for each $n$,
  \item $0 < \rho < \inf_{n \neq m} |\lambda_n - \lambda_m|, \inf_{n \neq m} |\mu_n - \mu_m| \le 1$,
  \item $\|x_n\| = 1$ for each $n$,
  \item $\lim_{n \to \infty}(x_n - \lambda_n T(x_n)) = 0, \lim_{n \to \infty}(\mu_n x_n - T(x_n)) = 0$,
\end{itemize}
where $\rho$ and $C > 1$ are suitable constants. We claim that for each $n \in \mathbb{N}$, there exists a positive number $r(n) \le 1$ such that 
\begin{align*}
  r(n) \le \text{Vol}(x_{k+1}, x_{k+2}, \cdots, x_{k+n}) 
\end{align*}
for all but finitely many $k \in \mathbb{N}$. We prove the claim by the induction on $n$. For $n = 1$, we can take $r(1) = 1$. Suppose that $r(1), r(2), \cdots, r(n-1)$ have been determined. Then, there exists a natural number $k_0 \in \mathbb{N}$ such that for each $k \ge k_0$, we have
\begin{align*}
  r(l) \le \text{Vol}(x_{k+1}, x_{k+2}, \cdots, x_{k+l}) \, (1 \le l \le n-1),
\end{align*}
and
\begin{align*}
  \|\mu_k x_k - T(x_k)\| < \varepsilon
\end{align*}
where $0 < \varepsilon < \rho \cdot t^2$, $t := \prod_{i=1}^{n-1}r(i)$. In particular, for each $k \ge k_0$, $x_{k+1}, \cdots, x_{k+n-1}$ is $t$-orthogonal. Put $r(n) := C^{-1}\rho \cdot r(n-1) \cdot t \le 1$, and we shall prove that $r(n)$ is the desired constant. In fact, by the induction hypothesis, we have 
\begin{align*}
  & \text{Vol}(x_{m+1}, x_{m+2}, \cdots, x_{m+n}) \\
  = \, & \text{dist}(x_{m+n}, [x_{m+1}, \cdots, x_{m+n-1}]) \cdot \text{Vol}(x_{m+1}, x_{m+2}, \cdots, x_{m+n-1}) \\
  \ge \, &  r(n-1) \cdot \text{dist}(x_{m+n}, [x_{m+1}, \cdots, x_{m+n-1}])
\end{align*}
for each $m \ge k_0$. Thus, we have to show that for each choice of $\xi_1, \cdots, \xi_{n-1} \in K$,
  \begin{align*}
    y := x_{m+n} - (\xi_1 x_{m+1} + \cdots + \xi_{n-1}x_{m+n-1})
  \end{align*}
has norm $\ge C^{-1} \rho t$. Since $C^{-1} \rho t \le 1$, we may assume $1 = \|x_{m+n}\| = \|\xi_1 x_{m+1} + \cdots + \xi_{n-1}x_{m+n-1}\|$. Then, we have
\begin{align*}
  & \|\sum_{i=1}^{n-1}\xi_i(\mu_{m+n} - \mu_{m+i})x_{m+i}\| \\
  = \, & \|\mu_{m+n} \cdot (\sum_{i=1}^{n-1}\xi_ix_{m+i} - x_{m+n}) + (\mu_{m+n}x_{m+n} - T(x_{m+n}))  \\ 
  & \quad + T(x_{m+n} - \sum_{i=1}^{n-1}\xi_i x_{m+i}) + \sum_{i=1}^{n-1}\xi_i \cdot (T(x_{m+i}) - \mu_{m+i}x_{m+i})\| \\
  \le & \, C\|y\| \lor \varepsilon \lor \|y\| \lor (\varepsilon \cdot \max_{1\le i \le n-1} |\xi_i|) = C \|y\| \lor \varepsilon \lor (\varepsilon \cdot \max_{1\le i \le n-1} |\xi_i|).
\end{align*}
On the other hand, by $t$-orthogonality of $x_{m+1}, \cdots, x_{m+n-1}$, we obtain
  \begin{align*}
    1 = \|\xi_1 x_{m+1} + \cdots + \xi_{n-1}x_{m+n-1}\| \ge t \cdot \max_{1\le i \le n-1} |\xi_i|
  \end{align*}
  and
\begin{align*}
  \|\sum_{i=1}^{n-1}\xi_i(\mu_{m+n} - \mu_{m+i})x_{m+i}\| &\ge t \cdot \max_{1\le i \le n-1} |\xi_i| \cdot |\mu_{m+n} - \mu_{m+i}| \\
  & \ge t \rho \cdot  \max_{1\le i \le n-1} |\xi_i| \\
  &\ge t \rho \cdot \|\xi_1 x_{m+1} + \cdots + \xi_{n-1}x_{m+n-1}\| = t \rho.
\end{align*}
Consequently, we have
  \begin{align*}
    t \rho \le C \|y\| \lor \varepsilon t^{-1}.
  \end{align*}
  By our choice $\varepsilon < \rho t^2$, we must have 
  \begin{align*}
    C^{-1}\rho t \le \|y\|,
  \end{align*}
  which proves the claim. \par
  Finally, we prove $\Delta_{-}(T) > 0$. Let $n \in \mathbb{N}$. Choose a positive number $\varepsilon'$ with $0 < \varepsilon' < r(n)$, and choose a natural number $k_0 \in \mathbb{N}$ such that $\text{Vol}(x_{k+1}, \cdots, x_{k+n}) \ge r(n)$ and $\|x_k - \lambda_k T(x_k)\| < \varepsilon'$ for all $k \ge k_0$. By Lemma \ref{app4}, we have
  \begin{align*}
    \text{Vol}(x_{k_0+1}, \cdots, x_{k_0+n}) &= \text{Vol}(\lambda_{k_0 +1}T(x_{k_0+1}), \cdots, \lambda_{k_0 + n}T(x_{k_0+n})) \\
    &= |\lambda_{k_0 +1} \cdots \lambda_{k_0 + n}|\text{Vol}(T(x_{k_0+1}), \cdots, T(x_{k_0+n})) \\
    &\le |\lambda_{k_0 +1} \cdots \lambda_{k_0 + n}| \Delta_{n}(T) \text{Vol}(x_{k_0+1}, \cdots, x_{k_0+n}).
  \end{align*}
  Therefore, we obtain
  \begin{align*}
    \Delta_{n}(T) \ge |\lambda_{k_0 +1} \cdots \lambda_{k_0 + n}|^{-1} \ge s^{-n}.
  \end{align*}
  As a consequence, we have the desired inequality $\Delta_{-}(T) \ge s^{-1} > 0$.
\end{proof}  

Combining Proposition \ref{app5} and Proposition \ref{app6}, we obtain the following theorem.

\begin{thm} \label{app7}
  Let $T \in \mathcal{L}(E)$ be a compactoid operator. Then for each $r \in D_T$, we have $M_r = \sup_{|\lambda| \le r} \|(I - \lambda T)^{-1}\| < \infty$.
\end{thm}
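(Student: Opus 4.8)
\textbf{Proof proposal for Theorem \ref{app7}.}
The plan is to combine Propositions \ref{app5} and \ref{app6} by a straightforward contrapositive argument on the two volume-growth invariants $\Delta_\pm(T)$. First I would record the elementary inequality $\Delta_-(T) \le \Delta_+(T)$, which is immediate from $\liminf \le \limsup$ applied to the sequence $(\Delta_n(T))^{1/n}$. Since $T$ is a compactoid operator, Proposition \ref{app5} gives $\Delta_+(T) = 0$, and hence $\Delta_-(T) = 0$ as well.

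Now fix $r \in D_T$ and suppose, for contradiction, that $M_r = \sup_{|\lambda| \le r} \|(I - \lambda T)^{-1}\| = \infty$. Then the hypothesis of Proposition \ref{app6} is satisfied with $s = r$ (note $r \in D_T$ by assumption), so that proposition yields $\Delta_-(T) > 0$. This contradicts the equality $\Delta_-(T) = 0$ established in the previous step. Therefore $M_r < \infty$ for every $r \in D_T$, which is the assertion.

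I do not anticipate any genuine obstacle here: the theorem is precisely the juxtaposition of the two preceding propositions, and the only thing to check is that their hypotheses match up. The one point worth a sentence of care is the standing assumption introduced just before the definition of $\Delta_n(T)$ — namely that when $K$ is discretely valued we take $\|E\| \subseteq |K|$ — which is already in force in this part of the appendix, so Proposition \ref{app6} applies without extra conditions. The case $\dim E < \infty$ is vacuous for the compactoid discussion (there $\Delta_n(T)$ is only defined for $E$ infinite-dimensional), but in that case every $I - \lambda T$ with $\lambda \in U_T$ is invertible with continuous inverse on a finite-dimensional space and a direct compactness/continuity argument on the ball $|\lambda| \le r$ gives $M_r < \infty$ trivially; alternatively one simply restricts attention to the infinite-dimensional case, which is the only one of interest for the applications in Section \ref{a}.
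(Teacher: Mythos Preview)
Your proposal is correct and matches the paper's own approach exactly: the paper simply states that the theorem follows by combining Proposition~\ref{app5} (which gives $\Delta_{+}(T)=0$ for compactoid $T$) with Proposition~\ref{app6} (whose contrapositive yields $M_r<\infty$ once $\Delta_{-}(T)=0$). Your additional remarks on the standing assumption $\|E\|\subseteq|K|$ and the finite-dimensional case are reasonable side comments but go beyond what the paper records.
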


\section*{Acknowledgments}
 This work was supported by JST, the establishment of university fellowships
towards the creation of science technology innovation, Grant Number JPMJFS2102.

\end{document}